\documentclass[openany]{amsart}
\usepackage{amssymb, amsfonts, lacromay}
\usepackage{mathrsfs,comment}
\usepackage[usenames,dvipsnames]{color}
\usepackage[normalem]{ulem}
\usepackage{url}
\usepackage{tikz-cd}
\usepackage[all,arc,2cell]{xy}
\UseAllTwocells
\usepackage{enumerate}             

\usepackage{todonotes}
\usepackage{hyperref}  
\hypersetup{%
  bookmarksnumbered=true,%
  bookmarks=true,%
  colorlinks=black,%
  linkcolor=true,%
  citecolor=true,%
  filecolor=true,%Ä
  menucolor=black,%
  pagecolor=black,%
  urlcolor=black,%
  pdfnewwindow=true,%
  pdfstartview=FitBH}

\def\makeautorefname#1#2{\expandafter\def\csname#1autorefname\endcsname{#2}}
\def\equationautorefname~#1\null{(#1)\null}
\makeautorefname{footnote}{footnote}%
\makeautorefname{item}{item}%
\makeautorefname{figure}{Figure}%
\makeautorefname{table}{Table}%
\makeautorefname{part}{Part}%
\makeautorefname{appendix}{Appendix}%
\makeautorefname{chapter}{Chapter}%
\makeautorefname{section}{Section}%
\makeautorefname{subsection}{Section}%
\makeautorefname{subsubsection}{Section}%
\makeautorefname{theorem}{Theorem}%
\makeautorefname{thm}{Theorem}%
\makeautorefname{sta}{Statement}%
\makeautorefname{cor}{Corollary}%
\makeautorefname{lem}{Lemma}%
\makeautorefname{prop}{Proposition}%
\makeautorefname{pro}{Property}
\makeautorefname{conj}{Conjecture}%
\makeautorefname{defn}{Definition}%
\makeautorefname{notn}{Notation}
\makeautorefname{notns}{Notations}
\makeautorefname{rem}{Remark}%
\makeautorefname{rems}{Remarks}%
\makeautorefname{quest}{Question}%
\makeautorefname{exmp}{Example}%
\makeautorefname{ax}{Axiom}%
\makeautorefname{claim}{Claim}%
\makeautorefname{assn}{Assumption}%
\makeautorefname{asses}{Assumptions}%
\makeautorefname{countcom}{Assumption}%
\makeautorefname{countcoms}{Assumptions}%
\makeautorefname{countmcom}{Assumption}%
\makeautorefname{con}{Construction}%
\makeautorefname{prob}{Problem}%
\makeautorefname{warn}{Warning}%
\makeautorefname{obs}{Observation}%
\makeautorefname{conv}{Convention}%

\newtheorem{thm}{Theorem}[section]

\newtheorem{cor}{Corollary}[section]
\newtheorem{prop}{Proposition}[section]
\newtheorem{lem}{Lemma}[section]

\theoremstyle{definition}
\newtheorem{defn}{Definition}[section]

\newtheorem{con}{Construction}[section]
\newtheorem{exmp}{Example}[section]
\newtheorem{notn}{Notation}[section]

\newtheorem{rem}{Remark}[section]

\newcounter{assn}
\renewcommand{\theassn}{\Alph{assn}}
\newenvironment{ass}[1][]{\refstepcounter{assn}\par\medskip\noindent
   \textbf{Assumption~\theassn.} \rmfamily}{\medskip}

 \newcounter{countcom}
\renewcommand{\thecountcom}{\Alph{countcom}$_{com}$}

    \newcounter{countmcom}
\renewcommand{\thecountmcom}{\Alph{countmcom}$_{mcom}$}

\makeatletter
\let\c@obs=\c@thm
\let\c@sta=\c@thm
\let\c@cor=\c@thm
\let\c@prop=\c@thm
\let\c@lem=\c@thm
\let\c@prob=\c@thm
\let\c@con=\c@thm
\let\c@conj=\c@thm
\let\c@defn=\c@thm
\let\c@notn=\c@thm
\let\c@notns=\c@thm
\let\c@exmp=\c@thm
\let\c@ax=\c@thm
\let\c@pro=\c@thm
\let\c@ass=\c@thm
\let\c@warn=\c@thm
\let\c@rem=\c@thm
\let\c@asscom=\c@thm
\let\c@assmcom=\c@thm
\let\c@conv=\c@thm
\let\c@sch=\c@thm
\let\c@equation\c@thm
\numberwithin{equation}{section}
\makeatother

\definecolor{orange}{rgb}{1,0.5,0}

\newcommand{\bq}{\mathbf{q}}

\title{The homotopical monadicity theorem}

\author{Hana Jia Kong}
\address{School of Mathematical Sciences, Zhejiang University, Hangzhou, China}

\email{hana.jia.kong@gmail.com}

\author{J. Peter May}
\address{Department of Mathematics, The University of Chicago, Chicago, IL 60637}
\email{may@math.uchicago.edu}

\author{Foling Zou}
\address{Institute of Mathematics, Chinese Academy of Sciences, Beijing, China}
\email{zoufoling@amss.ac.cn}

\subjclass{Primary 55P42, 55P43, 55P91;\\
Secondary 18A25, 18E30, 55P48, 55U35}

\begin{document}

\begin{abstract}  We give a broad homotopical analog of the classical categorical Beck monadicity theorem.  It often holds when classical monadicity fails.  This grew out of an understanding of a general context for recognition principles in iterated loop space theory, as treated in \cite{KMZ1},  but the present result applies differently and more generally.   An example gives a new perspective on the old equivalence between simplicial sets and topological spaces: both are equivalent to simplicial topological spaces, and the equivalence implies a curiously close relationship between realizations of simplicial spaces and realizations of their underlying simplicial sets, viewed as discrete simplicial spaces.
\end{abstract}

\maketitle

\tableofcontents

%\mainmatter

\section*{Introduction}  Throughout this paper, we assume given an adjunction $\SI\dashv \OM$
between categories $\sT$ and $\sS$.   The composite $\GA = \OM \SI$ is  then a monad in $\sT$ that acts naturally on $\OM Z$ for objects $Z$ of $\sS$, so that $\OM$ factors through a functor $\OM_{\GA}\colon \sS \rtarr \GA[\sT]$, where $\GA[\sT]$ is the category of $\GA$-algebras in  $\sT$.   As we explain in \autoref{SecClass}, $\OM_{\GA}$ generally has a left adjoint $\SI_{\GA}$, and Beck's  categorical monadicity theorem gives conditions which ensure that  
$(\SI_{\GA},\OM_{\GA})$ is an adjoint equivalence.  While that theorem has many applications, we are concerned with what happens when it fails, as is more usual.

 In \autoref{SecHom}, we give minimal  homotopical conditions on $\sT$ and $\sS$ which ensure that $\GA[\sT]$ and $\sS_{c}$ have equivalent homotopy categories, where $\sS_{c}$ is the full subcategory of ``$\OM$-connective" objects of $\sS$ (as defined in \autoref{Zbar}).    In some applications, $\sS_{c} = \sS$ (see \autoref{all?}).  Our first version of such an equivalence is the 
``crude homotopical monadicity theorem"  (see \autoref{HomMon}).  The equivalence relies on the use of a well-behaved natural homotopical resolution $\overline{Y} = B(\GA, \GA, Y)$ of a $\GA$-algebra $Y$. However, we do not know that $\overline{Y}$ is itself a $\GA$-algebra, despite its role in the proof.  The reason is that $\GA$ does not commute with realization.  

As we observe in \autoref{OPLAX}, the two weak equivalences with source $\overline{Y}$ that appear in our comparison of a $\GA$-algebra $Y$ to a $\GA$-algebra  $\OM Z$  nevertheless exhibit preservation of monadic algebra structure.  This is  due to a conceptual link between monads appearing in the input and output of realization.  Weakening the notion of a map of $\GA$-algebras accordingly, our axioms imply the desired equivalence of homotopy categories, where the homotopy category of $\GA$-algebras is defined using the weakened notion of a map of $\GA$-algebras (see \autoref{HomMon2}, \autoref{Onew}, and \autoref{Onew2}). An abstract categorical framework that we are applying is presented in the brief Appendix.  We will discuss passage to $\infty$-categories in \cite{KMZ1}. 

In \autoref{SecSimp} we give a perhaps whimsical application where there presumably is no such monad $\bC$.   It shows that the homotopy category of simplicial spaces sits in between the homotopy categories of simplicial sets and spaces and is equivalent to both and also to the homotopy categories of algebras over both of the two relevant adjunction monads.  The first should be an old result but in our formulation seems to be new.  Consideration of these particular adjunction monads appears to be new, and we admit to having no applications in mind. The section raises unanswered questions about the relationship between realization of simplicial spaces and realization of their underlying simplicial sets. 

Since the axioms are so simple and so easily verified, we expect there to be many  examples of homotopical monadicity in fields other than algebraic topology where homotopy categories play a role.   The axiomatization in this paper has already been applied to motivic homotopy theory, by Srinivasan \cite{Ajay}.  

The results here have earlier $\infty$-categorical counterparts, known as Barr-Beck-Lurie monadicity \cite{Lurie}, but the axioms there are verified quite differently.  
  
\section{The categorical monadicity theorem}\label{SecClass}
\subsection{The monadic adjunction associated to an adjunction} Recall that for any adjunction $(\SI,\OM)$ from any category $\sT$ to another category $\sS$, with unit $\et$ and counit $\epz$, the isomorphism 
\begin{equation}\label{adj1} 
\sS(\SI X, Y) \iso \sT(X, \OM Y)
\end{equation} 
is given by the composite
\begin{equation}\label{adj2} 
\xymatrix@1{  \sS(\SI X, Y)  \ar[r]^-{\OM} & \sT(\OM\SI X, \OM Y) \ar[r]^-{\et^*} & \sT(X, \OM Y)\\}
\end{equation}
with inverse the composite
\begin{equation}\label{adj3} 
\xymatrix@1{ \sT(X, \OM Y) \ar[r]^-{\SI} & \sS(\SI X,\SI\OM Y) \ar[r]^-{\epz_*} & \sS(\SI X,Y).\\}
\end{equation} 
The composite $\GA =\OM\SI$ is a monad with  product $\mu = \OM\epz\colon \GA\GA \rtarr \GA$ and unit $\et\colon \Id \rtarr \GA$.  For $Z\in \sS$, $\OM Z$ is a  $\GA$-algebra with action  $\OM\epz\colon \GA \OM Z \rtarr \OM Z$.   

\begin{notn}   Let $\GA[\sT]$ denote the category of $\GA$-algebras in $\sT$ and $\OM_{\GA}$ denote the functor $\OM$, but viewed as taking values in $\GA[\sT]$.   We write $\GA$ ambiguously both for the monad in $\sT$ and for the  functor $\OM_{\GA}\SI$ to $\GA$-algebras.
\end{notn}

Here we depart from the categorical literature.   As we will want anyway when we turn to homotopy theory, we assume that $\sS$ and $\sT$ are cocomplete.   However, all that  we really need for the moment is the existence of certain coequalizers.  Recall that a coequalizer of maps  $f,g\colon  A\rtarr B$  in $\sT$ is a map $q\colon B\rtarr C$ in $\sT$ such that $qf = qg$ and $q$ is universal with this property.  

We shall generalize the context above and the  following definitions and results in \cite[Section 1.1]{KMZ1}.   Proofs omitted here are given there. 

\begin{defn}\label{twist} Define $\SI_{\GA}\colon \GA[\sT] \rtarr \sS$ to be the tensor product $\SI\otimes_{\GA} (-)$.  Explicitly, on a $\GA$-algebra $Y$, it is given by  the coequalizer in $\sS$ displayed in the diagram
\begin{equation}\label{split4} 
 \xymatrix@1{
\SI {\GA} Y  \ar@<.7ex>[rr]^{\epz}  \ar@<-.7ex>[rr]_{\SI\tha}  &&  \SI  Y \ar[rr]^-{q} & & \SI_{\GA}Y. \\}  
\end{equation}
\end{defn}

By comparison of coequalizers, $\SI_{\GA}$ is the object function of a  functor  $\GA[\sT] \rtarr \sS$.   A generalization of the following result is proven in \cite[Proposition 1.3 and Remark 1.25]{KMZ1}.  The proof there shows that $\et_{\GA}$ is a map of $\GA$-algebras.

\begin{prop}\label{keyadj} The functor $\SI_{\GA}\colon \GA[\sT] \rtarr \sS$ is left adjoint to $\OM_{\GA}$.  Moreover, the unit $\et_{\GA}$ and counit  $\epz_{\GA}$  of the adjunction are described by the following diagrams. 
$$ 
\xymatrix{  
Y \ar[r]^-{\et}  \ar[dr]_{\et_{\GA}} & \OM\SI Y \ar[d]^{\OM q}   &   \text{and}  &      \SI {\GA} \OM Z  \ar@<.7ex>[rr]^{\epz}  \ar@<-.7ex>[rr]_-{\SI \OM\epz}  &&  \SI  \OM Z \ar[rr]^-{q}  \ar[d]^{\epz} & & \SI_{\GA} \OM_{\GA} Z  \ar[dll]^{\epz_{\GA}} \\
& \OM_{\GA}\SI_{\GA} Y                                                               &         &             & &   Z   & & \\}
$$
\end{prop}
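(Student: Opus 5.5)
The plan is to prove the adjunction directly by constructing the natural bijection
\[
\sS(\SI_{\GA} Y, Z) \iso \GA[\sT](Y, \OM_{\GA} Z)
\]
from the universal property of the coequalizer \eqref{split4}. The same construction will identify the unit and counit.

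\textbf{Step 1: maps out of $\SI_{\GA}Y$.} By \eqref{split4}, maps $\SI_{\GA}Y \rtarr Z$ in $\sS$ correspond bijectively to maps $g\colon \SI Y \rtarr Z$ with $g\epz = g\SI\tha$ as maps $\SI\GA Y \rtarr Z$.

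\textbf{Step 2: transport across the adjunction.} Under \eqref{adj1}, such $g$ corresponds to $f = \OM g\circ\et\colon Y \rtarr \OM Z$, with $g = \epz\circ\SI f$ by \eqref{adj3}. I would show that the equalizing condition on $g$ is equivalent to $f$ being a map of $\GA$-algebras, that is, $f\tha = \OM\epz\circ\GA f$.
\begin{itemize}
\item The adjoint of $g\SI\tha$ is $f\tha$, by naturality of \eqref{adj1} in the source.
\item The adjoint of $g\epz\colon \SI\OM\SI Y \rtarr Z$ is $\OM(g\epz)\et_{\OM\SI Y}$. Using $\OM\epz\circ\et\OM = \id$ (a triangle identity), this equals $\OM g = \OM\epz\circ\OM\SI f = \OM\epz\circ\GA f$.
\end{itemize}
Since \eqref{adj1} is a bijection, $g\epz = g\SI\tha$ holds exactly when $f$ is a $\GA$-map. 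Naturality in $Y$ and $Z$ is inherited from naturality of \eqref{adj1} and of the coequalizers.

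\textbf{Step 3: the unit.} Take $Z = \SI_{\GA}Y$ and the identity map. Its corresponding $g$ is $q$, so the unit is $\et_{\GA} = \OM q\circ\et$, as in the left diagram. That it is a $\GA$-map follows from Step 2.

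\textbf{Step 4: the counit.} Take $Y = \OM Z$ and $f = \id$. Then $g = \epz\circ\SI\id = \epz\colon \SI\OM Z \rtarr Z$. This $g$ coequalizes the pair because $\epz\circ\epz\SI\OM = \epz\circ\SI\OM\epz$ by naturality of $\epz$. Hence $\epz$ factors as $\epz_{\GA}\circ q$, which is the right diagram.

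\textbf{Main obstacle.} The only real content is the computation in Step 2 that the adjoint of $g\epz$ is $\OM\epz\circ\GA f$. This needs careful tracking of which instance of $\epz$ is meant (at $\SI Y$ versus at $Z$) and an application of the triangle identity. Everything else is formal coequalizer bookkeeping.
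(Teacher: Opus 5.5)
Your proof is correct, but it follows a different route from the paper's reference proof. The paper does not prove \autoref{keyadj} itself; it defers to \cite{KMZ1}. The way it cites that proof suggests a unit--counit argument: one writes down $\et_{\GA}$ and $\epz_{\GA}$ as in the displayed diagrams, shows by a separate diagram chase that $\et_{\GA}$ is a map of $\GA$-algebras (diagram (1.20) there, whose specialization is reused in the proof of \autoref{Beckthm2}), and then verifies the triangle identities. You instead construct the natural bijection $\sS(\SI_{\GA}Y,Z)\iso\GA[\sT](Y,\OM_{\GA}Z)$ directly from the universal property of \eqref{split4} and the adjunction \eqref{adj1}. Your Step 2 computation is right on both legs: naturality of $\et$ handles the $\SI\tha$ leg, and the triangle identity $\OM\epz\com\et\OM=\id$ at $\SI Y$ handles the $\epz$ leg. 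Together they show that the coequalizing condition on $g$ is exactly the $\GA$-map condition $f\tha=\OM\epz\com\GA f$ on its adjoint. What your route buys is that the $\GA$-map property of $\et_{\GA}$, and hence the square used in \autoref{Beckthm2}, comes for free as the image of an identity under the bijection. Naturality likewise follows from that of \eqref{adj1} and of the comparison maps of coequalizers, so no triangle identities need checking. The unit and counit are then read off by evaluating at identities. The diagrammatic route, in exchange, puts the relevant commutative diagrams on record for later reuse. Your separate check in Step 4 that $\epz$ coequalizes the pair is harmless but redundant, since it is Step 2 applied to $f=\id$.
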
 

A quick diagram chase gives the following consequence. 
\begin{cor}  Let $\GA_{\GA} = \OM_{\GA}\SI_{\GA}$ be the resulting adjunction monad in $\GA[\sT]$ and let $\bU_{\GA} \colon \GA[\sT]\rtarr \sT$ be the forgetful functor. Then $\OM q\colon \GA = \OM\SI  \rtarr \OM_{\GA}\SI_{\GA}= \bU_{\GA}   \GA_{\GA}$ is a map of monads, meaning that the evident unit and product diagrams commute when viewed as maps in $\sT$.
\end{cor}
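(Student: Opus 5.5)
We must check two diagrams in $\sT$, the unit diagram and the product diagram, for the natural map $\OM q\colon \OM\SI Y\rtarr \OM\SI_{\GA}Y$, where $Y$ now ranges over $\GA$-algebras. There is a subtlety to fix first. On a $\GA$-algebra $Y$, the monad $\GA$ is applied to the underlying object $\bU_{\GA}Y$, so the comparison is a natural transformation $\GA\bU_{\GA}\rtarr \bU_{\GA}\GA_{\GA}$ of functors $\GA[\sT]\rtarr\sT$. "Map of monads" means compatibility with the units $\et$ and $\et_{\GA}$ and with the products $\mu=\OM\epz$ and $\mu_{\GA}=\OM_{\GA}\epz_{\GA}\SI_{\GA}$. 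I will use only \autoref{keyadj}: the description of $\et_{\GA}$ and $\epz_{\GA}$, and the fact that $\et_{\GA}$ is a map of $\GA$-algebras.

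\textbf{Unit diagram.} This is literally the left triangle of \autoref{keyadj}: $\OM q\circ \et = \et_{\GA}$. Nothing more needs to be checked.

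\textbf{Product diagram.} We must show that
\[ \OM q\circ \OM\epz_{\SI Y} = \OM\epz_{\GA}\circ \Theta \colon \OM\SI\OM\SI Y\rtarr \OM\SI_{\GA}Y, \]
where $\Theta$ is the doubled comparison
\[ \Theta = \OM q_{\SI_{\GA}Y}\circ \OM\SI\OM q = \OM\SI_{\GA}(\OM q)\circ \OM q_{\OM\SI Y}, \]
and the two expressions for $\Theta$ agree by naturality of $q$. Since everything on both sides is of the form $\OM(-)$, it suffices to prove the equation in $\sS$ before applying $\OM$:
\[ q\circ \epz_{\SI Y} = \epz_{\GA}\circ q\circ \SI\OM q \colon \SI\OM\SI Y\rtarr \SI_{\GA}Y. \]
Here the inner $q$ is the coequalizer map for the algebra $\OM_{\GA}\SI_{\GA}Y$.

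To prove this, I use the right-hand diagram of \autoref{keyadj} with $Z=\SI_{\GA}Y$. It gives $\epz_{\GA}\circ q = \epz_{\SI_{\GA}Y}\colon \SI\OM\SI_{\GA}Y\rtarr\SI_{\GA}Y$. The right side therefore becomes $\epz_{\SI_{\GA}Y}\circ \SI\OM q$. By naturality of $\epz$ applied to the map $q\colon \SI Y\rtarr\SI_{\GA}Y$, this equals $q\circ\epz_{\SI Y}$, which is the left side. Applying $\OM$ then gives the product diagram.

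\textbf{Main obstacle.} The only real care needed is bookkeeping: confirming that $\Theta$ is the correct horizontal composite for the monad-map axiom, and that the algebra structure on $\OM_{\GA}Z$ used in $\mu_{\GA}$ is $\OM\epz$. Both facts are read off from \autoref{keyadj} and its definitions, so the "quick diagram chase" reduces to one use of the counit description and one use of naturality of $\epz$.
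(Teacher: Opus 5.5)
Your proof is correct and is exactly the ``quick diagram chase'' from \autoref{keyadj} that the paper has in mind. The unit axiom is the left triangle of \autoref{keyadj}; for the product axiom, once the outer $\OM$ is stripped off, it follows from the counit description $\epz_{\GA}\com q = \epz$ at $Z=\SI_{\GA}Y$ together with naturality of $\epz$ along $q\colon \SI Y\rtarr \SI_{\GA}Y$. Reading ``map of monads'' as a transformation $\GA\bU_{\GA}\rtarr \bU_{\GA}\GA_{\GA}$, with $\Theta$ as the horizontal composite, is the right interpretation; the extra facts you mention (that $\et_{\GA}$ is a $\GA$-map, and the second expression for $\Theta$) are harmless but not needed.
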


Since $\OM = \bU_{\GA} \OM_{\GA}$ and $\GA$ is the left adjoint to $\bU_{\GA}$, for $X\in \sT$ and $Z\in \sS$ we have
$$\sS(\SI X,Z)  \iso \sT(X, \bU_{\GA} \OM_{\GA}Z) \iso \bG[\sT](\GA X,\OM_{\GA}Z) 
\iso \sS(\SI_{\GA} \GA X,Z).$$
Inspection of the adjunction and a triangle identity give the following conclusion.

\begin{prop}\label{formalBPQ}  For $X$ in $\sT$,  the map
$$\SI_{\GA}\epz_{\GA} \colon \SI_{\GA}\GA X = \SI_{\GA}\OM_{\GA} \SI X \rtarr \SI X$$
is an isomorphism with inverse $\xymatrix{\SI X \ar[r]^-{\SI \et} & \SI \GA X \ar[r]^-{q} & \SI_{\GA}\GA X}$. 
\end{prop}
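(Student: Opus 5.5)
The plan is to use the natural isomorphisms displayed just before the statement, together with the Yoneda lemma, and then to check the formulas for the maps by a direct diagram chase.

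\emph{Step 1: isomorphism via Yoneda.} For $Z\in\sS$ we have the chain of natural bijections
$$\sS(\SI X,Z)\iso \GA[\sT](\GA X,\OM_{\GA}Z)\iso \sS(\SI_{\GA}\GA X,Z).$$
By Yoneda this yields an isomorphism $\SI X\iso \SI_{\GA}\GA X$. To identify it, I will set $Z=\SI X$ and trace the identity of $\SI X$ through the composite bijection; this is the inspection of the adjunction referred to above the statement. Under the free–forgetful adjunction, $\id_{\SI X}$ corresponds to $\et\colon X\rtarr \OM\SI X$. Its free extension $\GA X\rtarr \OM_{\GA}\SI X$ is $\OM\epz\circ\GA\et=\id$, by a triangle identity. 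By \autoref{keyadj}, the map $\id\colon \GA X\rtarr \OM_{\GA}\SI X=\OM_{\GA}\SI X$ corresponds under $\SI_{\GA}\dashv\OM_{\GA}$ to $\epz_{\GA}\circ \SI_{\GA}(\id)=\epz_{\GA}\colon \SI_{\GA}\OM_{\GA}\SI X\rtarr \SI X$. So the Yoneda isomorphism is exactly $\epz_{\GA}$ evaluated at $\SI X$, which the statement writes as $\SI_{\GA}\epz_{\GA}$.

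\emph{Step 2: the claimed inverse.} Let $s=q\circ\SI\et\colon \SI X\rtarr \SI_{\GA}\GA X$. First, the description of $\epz_{\GA}$ in \autoref{keyadj} gives $\epz_{\GA}\circ q=\epz$. Hence
$$\epz_{\GA}\circ s=\epz\circ\SI\et=\id_{\SI X}$$
by the other triangle identity. Since $\epz_{\GA}$ is already known to be an isomorphism by Step 1, $s$ is its two-sided inverse.

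As a check not relying on Step 1, I would also verify $s\circ\epz_{\GA}=\id$ directly. Because $q$ is an epimorphism (it is a coequalizer), it suffices to show $s\circ\epz\circ q$-precomposition agrees, that is,
$$q\circ\SI\et\circ\epz=q\colon \SI\GA X\rtarr \SI_{\GA}\GA X.$$
Naturality of $\epz$ gives $\SI\et\circ\epz_{\SI X}=\epz_{\SI\GA X}\circ\SI\OM\SI\et=\epz\circ\SI\GA\et$. Next, $q\circ\epz=q\circ\SI\tha$ with $\tha=\OM\epz=\mu$, by the coequalizer \eqref{split4}. Finally, $\mu\circ\GA\et=\id$.

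The only real obstacle is bookkeeping: keeping track of which $\epz$ (at $\SI X$ or at $\SI\GA X$) and which unit appears at each point, and correctly identifying the transported identity map in Step 1. The direct verification in Step 2 sidesteps this if Step 1 gets muddled.
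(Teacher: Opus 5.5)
Your proposal is correct and follows essentially the paper's route. The paper also uses the chain of adjunction bijections $\sS(\SI X,Z)\iso\GA[\sT](\GA X,\OM_{\GA}Z)\iso\sS(\SI_{\GA}\GA X,Z)$ with Yoneda, and a triangle identity $\epz\com\SI\et=\id$ identifies the isomorphism as $\epz_{\GA}$ at $\SI X$ with inverse $q\com\SI\et$. Your extra direct check that $q\com\SI\et\com\epz=q$ is also correct: it uses naturality of $\epz$, the coequalizer relation $q\com\epz=q\com\SI\mu$, and $\mu\com\GA\et=\id$, and it simply makes explicit what the paper leaves to "inspection."
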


\begin{rem} For $Z\in \sS$, a triangle identity gives that the composite 
$$\xymatrix{\OM_{\GA}Z \ar[r]^-{\et_{\GA}} & \OM_{\GA}\SI_{\GA}\OM_{\GA} Z \ar[r]^-{\OM_{\GA}\epz_{\GA}} & \OM_{\GA} Z\\}$$ 
is the identity. We conclude from \autoref{formalBPQ} that both maps are isomorphisms when $Z = \SI_{\GA} Y$ for $Y\in \GA[\sT]$. 
\end{rem} 

\subsection{The categorical monadicity theorem}\label{Beck}

If we omitted the adjective adjoint, the following definition would be standard. 

\begin{defn}  The adjunction $(\SI,\OM)$ is said to be {\it monadic} if $\OM_{\GA}$ is an adjoint equivalence of categories.  
\end{defn}

From the diagram defining $\epz_{\GA}$ in \autoref{keyadj}, it is clear that what is needed is a categorical hypothesis ensuring that the downward arrow $\epz$ is itself the coequalizer of $\epz$ and $\SI\OM\epz$, so that
$\epz_{\GA}\colon \SI_{\GA}\OM_{\GA} Z \rtarr Z$ is an isomorphism.  We require some preliminary definitions to state the hypothesis and phrase the Beck monadicity theorem in its usual form, but the real point of these definitions is precisely to ensure that isomorphism.  However,  these definitions do not require an adjunction, only a functor $\OM \colon \sS \rtarr \sT$.   Some of this, but not the key isomorphism part, is subsumed by our requirement that $\sT$ and $\sS$ are cocomplete, so that we already have all coequalizers.

Recall that a split coequalizer is given by a diagram
\begin{equation}\label{split1}
\xymatrix@1{
A  \ar@<2ex>[rr]^{f}  \ar@<-1ex>[rr]^{g}  &&  B \ar[rr]^-{q} \ar@<+3ex>[ll]^j& & C \ar@<+2ex>[ll]^i. \\}  
\end{equation}
such that
\[  qf = qg, \, \, qi = \id, \, \, fj=iq, \, \, \text{and} \, \, gj = \id. \]
It is a coequalizer since if $h\colon B\rtarr D$ is such that $hf=hg$, then $(hi)q = h$ and $hi$ is the unique map $C\rtarr D$ with this property.
If we apply any functor $\sT\rtarr \sV$ to a split coequalizer we get another split coequalizer.

\begin{defn}  An {\em $\OM$-split pair} is a pair of maps 
\begin{equation}\label{splitone}   \xymatrix@1{ A  \ar@<.5ex>[rr]^{f}  \ar@<-1ex>[rr]_{g}  &&  B \\}
\end{equation}
in $\sS$ together with a split coequalizer diagram in $\sT$: 
\begin{equation}\label{splittwo}
\xymatrix@1{ \OM A  \ar@<2ex>[rr]^{\OM f}  \ar@<-1ex>[rr]^{\OM g}  &&  \OM B \ar[rr]^-{r} \ar@<+3ex>[ll]^j& &  X \ar@<+2ex>[ll]^i. \\}  
\end{equation}
\end{defn}

Given the pair of maps $(f,g)$ in \autoref{splitone}, a {\em fork} is a map $q\colon B\rtarr C$ in $\sS$ such that $q\com f = q\com g$: 
\begin{equation}\label{splitthree}   
\xymatrix@1{ A  \ar@<.5ex>[rr]^{f}  \ar@<-1ex>[rr]_{g}  &&  B  \ar[r]^q &  C\\}
\end{equation}
If $(f,g)$ is an $\OM$-split pair and $q\colon B\rtarr C$ is a fork, then $\OM q \com \OM f = \OM q \com g$.  Setting 
$$  \et = \OM q\com i \colon   X  \rtarr \OM B\rtarr \OM C,\\$$
it follows that $\et$ is the unique map such that $\et \com r = \OM q$. 

\begin{defn} The functor $\OM\colon  \sS \rtarr \sT$ {\em creates coequalizers of $\OM$-split pairs} if 
for every  $\OM$-split pair $(f,g)$, there is a fork \autoref{splitthree}  in $\sS$ such that

\noindent
(i) The map $\et = \OM q \com i\colon X \rtarr \OM C$  such that $\et \com r = \OM q$ is an isomorphism, and

\noindent
(ii) $C$ is a coequalizer of $f$ and $g$ for any such fork  \autoref{splitthree}. 

\noindent
The functor $\OM$ {\em strictly creates coequalizers of $\OM$-split pairs} if every split coequalizer
\autoref{splittwo} lifts uniquely to a coequalizer \autoref{splitthree}.
\end{defn}

\begin{thm}\label{Beckthm}[Beck monadicity theorem]  The adjunction  $(\SI,\OM)$ is monadic  if and only if the functor  $\OM$ creates coequalizers of $\OM$-split pairs.  
\end{thm}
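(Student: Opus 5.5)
We prove the two implications separately, using the explicit adjunction $(\SI_{\GA},\OM_{\GA})$ of \autoref{keyadj}. Since both $\sS$ and $\sT$ are cocomplete, $\SI_{\GA}$ always exists. Monadicity therefore amounts to the unit $\et_{\GA}$ and counit $\epz_{\GA}$ being isomorphisms.

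\emph{Monadic implies creation.} Suppose $\OM_{\GA}$ is an adjoint equivalence. We first show that the forgetful functor $\bU_{\GA}\colon \GA[\sT]\rtarr\sT$ creates coequalizers of $\bU_{\GA}$-split pairs. This is the standard lemma. Given $\GA$-algebra maps $f,g\colon (A,\al)\rtarr (B,\be)$ with a split coequalizer $r\colon B\rtarr X$ in $\sT$, note that $\GA$ and $\GA\GA$ preserve split coequalizers. So $\GA r$ and $\GA\GA r$ are coequalizers, and this lets us define a unique action $\xi\colon \GA X\rtarr X$ with $\xi\com\GA r = r\com\be$. The unit and associativity axioms for $\xi$ follow by cancelling the epimorphisms $r$, $\GA r$ and $\GA\GA r$. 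Universality in $\GA[\sT]$ holds because a fork $h$ of algebras factors as $hi$ in $\sT$, and $hi$ is an algebra map, again by cancelling $\GA r$. Since $\OM=\bU_{\GA}\OM_{\GA}$ and $\OM_{\GA}$ is an equivalence, transporting this along $\OM_{\GA}$ gives both (i) and (ii). Here (ii) holds because any fork whose image is isomorphic to the split coequalizer corresponds, under the equivalence, to the created coequalizer.

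\emph{Creation implies monadic.} Suppose $\OM$ creates coequalizers of $\OM$-split pairs. There are two steps.

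First, the counit is an isomorphism. For $Z\in\sS$, the pair $\epz,\SI\OM\epz\colon \SI\GA\OM Z\rightrightarrows \SI\OM Z$ is $\OM$-split. Applying $\OM$ gives
\[ \GA\GA\OM Z\rightrightarrows \GA\OM Z\rtarr \OM Z, \]
with sections $\et_{\GA\OM Z}$ and $\et_{\OM Z}$, which is the canonical split coequalizer of a free resolution. The fork $\epz\colon\SI\OM Z\rtarr Z$ has $\OM\epz$ equal to the splitting map $r$, so the comparison map in (i) is the identity. By (ii), $\epz$ is a coequalizer of the pair. Comparing with the diagram for $\epz_{\GA}$ in \autoref{keyadj} shows that $\epz_{\GA}$ is an isomorphism.

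Second, the unit is an isomorphism. For a $\GA$-algebra $(Y,\tha)$, the pair $\epz,\SI\tha\colon\SI\GA Y\rightrightarrows\SI Y$ is $\OM$-split. Indeed, $\OM$ applied to it is $\mu,\GA\tha\colon \GA\GA Y\rightrightarrows\GA Y$, which is split by $\tha\colon\GA Y\rtarr Y$ with sections $\et_{\GA Y}$ and $\et_Y$. By (i) there is a fork $q'\colon\SI Y\rtarr C$ with $X=Y\iso \OM C$ compatibly, and by (ii) $C$ is a coequalizer. Hence $C\iso\SI_{\GA}Y$. Under this identification, the isomorphism $Y\rtarr\OM C$ is $\OM q\com\et$, which is exactly $\et_{\GA}$ as described in \autoref{keyadj}. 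We conclude that $\et_{\GA}$ is an isomorphism.

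The main obstacle is the bookkeeping of identifications in both steps. In the counit step, we must check that the relevant pair is genuinely $\OM$-split and that the comparison map is the identity. In the unit step, we must check that the created isomorphism agrees with $\et_{\GA}$ rather than with some other isomorphism; this uses the uniqueness of $\et$ with $\et\com r=\OM q$, together with $i=\et_Y$. The converse direction's lemma on creating algebra structure on split coequalizers is routine but must be checked with care.
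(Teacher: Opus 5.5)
Your proposal is correct and follows essentially the paper's route. The ``only if'' direction goes through the standard lemma that $\bU_{\GA}$ creates coequalizers of split pairs, which the paper cites from Riehl and you sketch. The ``if'' direction applies creation to the canonical $\OM$-split pairs $(\epz,\SI\OM\epz)$ and $(\epz,\SI\tha)$ to show that $\epz_{\GA}$ and $\et_{\GA}$ are isomorphisms.

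Your comparison of the created fork with $q\colon \SI Y\rtarr \SI_{\GA}Y$ spells out a step the paper passes over quickly. One thing to add: $\et_{\GA}$ is then an isomorphism in $\GA[\sT]$, and not only in $\sT$, because it is a map of $\GA$-algebras (as the paper's commutative square records) and $\bU_{\GA}$ reflects isomorphisms.
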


It is not emphasized, or even mentioned, in most standard categorical references that the proof actually gives an adjoint equivalence 
$(\SI_{\GA}, \OM_{\GA})$, and that fact is the real starting point of our homotopical analog and especially its variants in \cite{KMZ1}.

The first step in the proof is the following easy categorical observation \cite[Proposition 5.4.9]{Riehl2}.  It
has nothing to do with the given adjunction $(\SI,\OM)$.    

\begin{prop}\label{Ccreate}  For any monad $\bC$ on any category $\sT$, the forgetful functor $\bU\colon \bC[\sT]\rtarr \sT$ strictly creates coequalizers of $\bU$-split pairs.
\end{prop}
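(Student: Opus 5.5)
Given a $\bU$-split pair, I will construct the $\bC$-algebra structure on the split coequalizer directly, then check that the resulting fork is a coequalizer in $\bC[\sT]$ and that the lift is unique.

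\emph{Setup.} Let $f,g\colon (A,\al)\rtarr (B,\be)$ be maps of $\bC$-algebras. Suppose that in $\sT$ we have a split coequalizer
$$\xymatrix@1{A \ar@<.5ex>[r]^{f}\ar@<-.5ex>[r]_{g} & B \ar[r]^{r} & X,}$$
with sections $i\colon X\rtarr B$ and $j\colon B\rtarr A$ satisfying $ri=\id$, $fj=ir$ and $gj=\id$.

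\emph{Step 1: the structure map.} Apply $\bC$ and $\bC\bC$. Functors preserve split coequalizers, so $\bC r$ and $\bC\bC r$ are again (split) coequalizers of $\bC f,\bC g$ and of $\bC\bC f,\bC\bC g$. Since $f$ and $g$ are algebra maps,
$$ r\be\,\bC f = rf\al = rg\al = r\be\,\bC g .$$
Hence $r\be$ factors uniquely through $\bC r$, giving $\xi\colon \bC X\rtarr X$ with $\xi\,\bC r = r\be$. Explicitly, $\xi = r\be\,\bC i$.

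\emph{Step 2: the algebra axioms.} Both axioms are checked after precomposing with epimorphisms.
\begin{itemize}
\item Unit: $\xi\et r = \xi\,\bC r\,\et = r\be\et = r$. Since $r$ is a (split) epimorphism, $\xi\et=\id$.
\item Associativity: precompose $\xi\mu$ and $\xi\,\bC\xi$ with $\bC\bC r$, which is split epi. Naturality of $\mu$ and the associativity of $\be$ reduce both composites to $r\be\mu = r\be\,\bC\be$.
\end{itemize}
By construction $r$ is then a map of $\bC$-algebras, so $r$ is a fork in $\bC[\sT]$.

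\emph{Step 3: universality.} Let $h\colon (B,\be)\rtarr (D,\de)$ be an algebra map with $hf=hg$. In $\sT$ there is a unique $k$ with $kr=h$, namely $k=hi$. It remains to see that $k$ is an algebra map. Precompose both $k\xi$ and $\de\,\bC k$ with $\bC r$:
$$ k\xi\,\bC r = kr\be = h\be = \de\,\bC h = \de\,\bC k\,\bC r .$$
Since $\bC r$ is epi, $k\xi=\de\,\bC k$. Uniqueness of $k$ in $\bC[\sT]$ follows from uniqueness in $\sT$.

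\emph{Step 4: uniqueness of the lift.} Any algebra structure $\xi'$ on $X$ making $r$ an algebra map satisfies $\xi'\,\bC r = r\be$. Since $\bC r$ is epi, $\xi'=\xi$. So the lift of the split coequalizer is unique, which is the strictness.

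\emph{Main obstacle.} The only place needing care is the associativity verification in Step 2, specifically tracking naturality of $\mu$ through the split epimorphism $\bC\bC r$. It is routine once one notes that $\bC\bC r$ is epi because it is split by $\bC\bC i$.
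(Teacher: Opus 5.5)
Your proof is correct. It is the standard argument the paper relies on; the paper gives no proof of its own and simply cites \cite[Proposition 5.4.9]{Riehl2}. You induce the action on $X$ through the split coequalizer $\bC r$, verify the algebra axioms and the universal property by cancelling the split epimorphisms $r$, $\bC r$ and $\bC\bC r$, and obtain strictness because any action $\xi'$ making $r$ an algebra map must satisfy $\xi'\,\bC r = r\beta$.
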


As in \cite[Corollary 5.4.10(i)]{Riehl2},  the definition of monadic implies the following consequence, which gives the ``only if''  implication of \autoref{Beckthm}. 

\begin{cor}  If $(\SI,\OM)$ is monadic, then $\OM$ creates coequalizers of $\OM$-split pairs.
\end{cor}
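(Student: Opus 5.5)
The plan is to transport the strict creation property of \autoref{Ccreate} along the equivalence $\OM_{\GA}$, using the factorization $\OM = \bU_{\GA}\OM_{\GA}$. Here $\bU_{\GA}\colon \GA[\sT]\rtarr \sT$ is the forgetful functor. Since $(\SI,\OM)$ is monadic, $\OM_{\GA}$ is an adjoint equivalence with inverse $\SI_{\GA}$. Its unit $\et_{\GA}$ and counit $\epz_{\GA}$ from \autoref{keyadj} are isomorphisms, and $\et_{\GA}$ is a map of $\GA$-algebras.

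First, let $(f,g)$ be an $\OM$-split pair in $\sS$ with split coequalizer \autoref{splittwo}. Then $(\OM_{\GA}f,\OM_{\GA}g)$ is a $\bU_{\GA}$-split pair in $\GA[\sT]$, since $\bU_{\GA}\OM_{\GA} = \OM$. By \autoref{Ccreate} there is a unique $\GA$-algebra structure on $X$ making $r$ a map of $\GA$-algebras. With this structure, $r\colon \OM_{\GA}B\rtarr X$ is a coequalizer of $\OM_{\GA}f,\OM_{\GA}g$ in $\GA[\sT]$. I would recall the key point of that proof briefly: the action on $X$ is $r\com\tha_B\com \GA i$, and universality uses that $\GA$ preserves split coequalizers.

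Next, define the fork in $\sS$ by applying $\SI_{\GA}$ and composing with the counit. Let $C = \SI_{\GA}X$, and let $q$ be the composite $B \rtarr \SI_{\GA}\OM_{\GA}B \rtarr \SI_{\GA}X$ given by $\epz_{\GA}^{-1}$ followed by $\SI_{\GA}r$. Naturality of $\epz_{\GA}$ gives $qf = qg$. Because $\SI_{\GA}$ is an equivalence, it preserves colimits. So $\SI_{\GA}r$ is a coequalizer of $\SI_{\GA}\OM_{\GA}f$ and $\SI_{\GA}\OM_{\GA}g$, and transporting along the isomorphisms $\epz_{\GA}$ shows that $q$ is a coequalizer of $f$ and $g$. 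Any other fork $q'$ whose image under $\OM$ satisfies (i) is handled the same way. Indeed, $\OM_{\GA}q'$ then factors through $r$ via an isomorphism of underlying objects that is an algebra map, hence an isomorphism in $\GA[\sT]$, so $\OM_{\GA}q'$ is a coequalizer in $\GA[\sT]$. Because $\OM_{\GA}$ is an equivalence, it reflects colimits, so $q'$ is a coequalizer in $\sS$. This gives (ii).

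Finally, I will verify (i): the induced map $X\rtarr \OM C$ with $\et\com r = \OM q$ should be an isomorphism. I expect it to be $\bU_{\GA}$ applied to $\et_{\GA}\colon X\rtarr \OM_{\GA}\SI_{\GA}X$. This follows from naturality of $\et_{\GA}$ with respect to $r$ together with the triangle identity $\OM_{\GA}\epz_{\GA}\com\et_{\GA}\OM_{\GA} = \id$, which gives $\OM_{\GA}q = \et_{\GA}\com r$. Since $\et_{\GA}$ is an isomorphism, so is $\et$. The main obstacle is this bookkeeping: checking that the isomorphism produced is exactly the $\et = \OM q\com i$ of the definition. That follows from the uniqueness of maps out of the split coequalizer $X$, using $r i = \id$. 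A secondary subtlety is the algebra-map claim in the proof of (ii), which uses that $\bU_{\GA}$ reflects isomorphisms.
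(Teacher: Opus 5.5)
Your proposal is correct and follows the same route as the paper. The paper simply invokes Riehl's Corollary 5.4.10(i): factor $\OM = \bU_{\GA}\OM_{\GA}$, apply the strict creation result of \autoref{Ccreate} to $\bU_{\GA}$, and transport along the equivalence $\OM_{\GA}$. You carry this out explicitly, including the check that $\OM q\com i$ is $\et_{\GA}$.

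One small misattribution in (ii): the fact that $\et'$ is a map of $\GA$-algebras comes from $r$ being a coequalizer in $\GA[\sT]$ (or from $\GA r$ being split epi). Reflection of isomorphisms by $\bU_{\GA}$ is only what upgrades $\et'$ to an isomorphism of algebras.
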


We complete the proof of \autoref{Beckthm} with an elaboration of the ``if'' part.

\begin{thm}\label{Beckthm2}[Monadicity] If the functor  $\OM$ creates coequalizers of $\OM$-split pairs, then
$\OM_{\GA}$ has a left adjoint $\SI_{\GA}$, and $(\SI_{\GA},\OM_{\GA})$ is an adjoint equivalence of categories.
\end{thm}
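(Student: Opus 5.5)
We already have the adjunction $(\SI_{\GA},\OM_{\GA})$ from \autoref{keyadj}; we do not need the hypothesis for that, only cocompleteness of $\sS$. So it remains to show that the unit $\et_{\GA}$ and the counit $\epz_{\GA}$ are isomorphisms. The plan is to show that both the defining coequalizer \autoref{split4} and the diagram defining $\epz_{\GA}$ in \autoref{keyadj} arise from $\OM$-split pairs. Then the creation hypothesis identifies the relevant coequalizers.

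\textbf{Counit.} Fix $Z\in\sS$ and consider the pair $\epz,\ \SI\OM\epz\colon \SI\GA\OM Z\rtarr \SI\OM Z$. Applying $\OM$ gives $\mu=\OM\epz_{\SI\OM Z}$ and $\GA\OM\epz\colon \GA\GA\OM Z\rtarr \GA\OM Z$. These fit into the standard split coequalizer
$$\xymatrix@1{\GA\GA\OM Z \ar@<.5ex>[r] \ar@<-.5ex>[r] & \GA\OM Z \ar[r]^-{\OM\epz} & \OM Z,}$$
with sections $i=\et_{\OM Z}$ and $j=\et_{\GA\OM Z}$. The split identities follow from the unit laws, naturality of $\et$, and the triangle identity $\OM\epz\com\et\OM=\id$.

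So we have an $\OM$-split pair, and $\epz\colon \SI\OM Z\rtarr Z$ is a fork for it. The comparison map $X=\OM Z\rtarr \OM Z$ is $\OM\epz\com\et=\id$, which is an isomorphism. By clause (ii) of the creation hypothesis, $Z$ is therefore a coequalizer of the pair. Hence the induced map $\epz_{\GA}\colon\SI_{\GA}\OM_{\GA}Z\rtarr Z$ is an isomorphism.

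\textbf{Unit.} Fix $Y\in\GA[\sT]$ with action $\tha$, and consider the pair $\epz,\ \SI\tha\colon \SI\GA Y\rtarr \SI Y$. Applying $\OM$ gives $\mu,\ \GA\tha\colon\GA\GA Y\rtarr \GA Y$. These fit into the split coequalizer
$$\xymatrix@1{\GA\GA Y \ar@<.5ex>[r] \ar@<-.5ex>[r] & \GA Y \ar[r]^-{\tha} & Y}$$
with sections $\et_Y$ and $\et_{\GA Y}$. So this is again an $\OM$-split pair.

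By clause (i) of the creation hypothesis, some fork $q'\colon \SI Y\rtarr C$ has the property that the induced map $Y\rtarr\OM C$ is an isomorphism. By clause (ii), $C$ is a coequalizer of the pair, so we may identify $C$ with $\SI_{\GA}Y$ and $q'$ with $q$. The induced map is then $\OM q\com\et=\et_{\GA}$. Hence $\et_{\GA}$ is an isomorphism in $\sT$.

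Since $\et_{\GA}$ is a map of $\GA$-algebras (\autoref{keyadj}), it remains to know that a $\GA$-algebra map which is an isomorphism in $\sT$ is an isomorphism of algebras. This holds because its inverse automatically commutes with the actions. So $\et_{\GA}$ is an isomorphism in $\GA[\sT]$, and $(\SI_{\GA},\OM_{\GA})$ is an adjoint equivalence.

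The main obstacle I expect is the bookkeeping in the unit step. We must check that the coequalizer produced by clause (ii) agrees with the chosen $\SI_{\GA}Y$ up to a unique isomorphism compatible with $q$. We must also check that, under this identification, the map guaranteed to be an isomorphism by clause (i) really is $\et_{\GA}$. This works because the map is forced by the equation $\et\com r=\OM q$ with $r=\tha$, and $\tha$ is an epimorphism since it is split by $\et_Y$.
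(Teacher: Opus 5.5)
Your proof is correct and is essentially the paper's argument. The unit $\et_{\GA}$ is an isomorphism by applying the creation hypothesis to the $\OM$-split pair $(\epz,\SI\tha)$, whose image under $\OM$ is the split coequalizer \autoref{split22}; the counit $\epz_{\GA}$ is an isomorphism because the naturality fork \autoref{splitZ} lies over the split coequalizer \autoref{split2again} and is therefore a coequalizer. You also make explicit two points the paper leaves implicit: the identification of the created coequalizer with $\SI_{\GA}Y$ compatibly with $q$, and the fact that a $\GA$-algebra map that is an isomorphism in $\sT$ is an isomorphism of $\GA$-algebras.
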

\begin{proof}
 Let  $(Y,\tha)$ be a $\GA$-algebra. Specializing \autoref{split1}, we have a split coequalizer
\begin{equation}\label{split22}
\xymatrix@1{
\GA\GA Y  \ar@<2ex>[rr]^{\GA\tha}  \ar@<-1ex>[rr]^{\mu}  &&  \GA Y \ar[rr]^-{\tha} \ar@<+3ex>[ll]^{\et}& & Y \ar@<+2ex>[ll]^{\et}. \\}  
\end{equation}
By our hypothesis or our completeness assumption, we have the coequalizer \autoref{split4}.  Applying $\OM$ to its pair gives the underlying pair of \autoref{split22}.  Moreover, by hypothesis, the map 
$$\et_{\GA} = \OM q\com \et\colon Y\rtarr \OM \SI Y \rtarr \OM \SI_{\GA}Y $$
such that $\et_{\GA}\com \tha = \OM q$ is an isomorphism.   A key diagram in the proof of a generalization of \autoref{keyadj}, namely (1.20) in the proof of Proposition 1.3 in \cite{KMZ1}, specializes to show that the following diagram commutes.   
\[
\xymatrix{
\GA Y \ar[r]^-{\GA\et_{\GA}} \ar[d]_{\tha} & \GA\OM_{\GA} \SI_{\GA}Y \ar[d]^{\OM\epz} \\
Y \ar[r]_-{\et_{\GA}} & \OM_{\GA} \SI_{\GA}Y\\}
\]
For any object $Z$ of $\sS$, we have a naturality fork
\begin{equation}\label{splitZ}  
 \xymatrix@1{ \SI\OM\SI\OM Z   \ar@<.5ex>[rr]^{\SI\OM \epz}  \ar@<-1ex>[rr]_{\epz}  &&  \SI\OM Z \ar[rr]^-{\epz} && Z.\\} 
 \end{equation}
Applying $\OM$ to this fork, we obtain the fork in the split coequalizer
\begin{equation}\label{split2again}
\xymatrix@1{ \OM\SI\OM\SI\OM Z  \ar@<2ex>[rr]^{\OM \SI\OM \epz}  \ar@<-1ex>[rr]^{\OM \epz}  &&  \OM\SI\OM Z \ar[rr]^-{\OM\epz} \ar@<+3ex>[ll]^{\et}
& &  \OM Z \ar@<+2ex>[ll]^{\et}. \\}  
\end{equation}
This is the special case $(Y,\tha) = (\OM Z,\OM\epz)$  of
\autoref{split22}, and
the content of the theorem is that, up to isomorphism, this special case is the
general case.   Our hypothesis gives that the fork \autoref{splitZ} displays a
coequalizer.  The universal property  of \autoref{split4} gives the inverse isomorphism to 
$\epz_{\GA}\colon \SI_{\GA}\OM_{\GA}Z \rtarr Z$.
\end{proof}

\section{The homotopical monadicity theorem}\label{SecHom}
\subsection{Homotopical assumptions and simplicial preliminaries}
We assume throughout that $\sT$ and $\sS$, in addition to being cocomplete, have standard notions of homotopy and have classes of weak equivalences satisfying the two out of three property.  We say that a map $f$  in $\GA[\sT]$ is a weak equivalence if it is a weak equivalence when considered as a map in $\sT$.   We assume that the functors $\OM$, $\SI$, and hence also $\GA$ and $\OM_{\GA}$  preserve weak equivalences (at least when restricted to good objects).   We do not assume that $\SI_{\GA}$ preserves weak equivalences, and in fact, despite our categorical motivation, $\SI_{\GA}$ will play no role at all in this section.  

\begin{rem} We will return to an analog $\epz_{\bC}$ defined using more structured monads $\bC$ in \cite{KMZ1}.
There, modulo a conjectured mild generalization of a theorem of Dwyer and Kan, we describe when equivalences of homotopy categories such as that given in \autoref{Onew2} below extend to equivalences of the $\infty$-categories associated to appropriate subcategories of $\bC[\sT]$ and $\sS$. 
\end{rem} 

Our proof of the homotopical monadicity theorem focuses on derived homotopy theory, defined in terms of a bar resolution  of algebras over monads.  That is a special case of the two-sided monadic bar construction, as is the homotopically well-behaved variant of the left adjoint to $\OM_{\GA}$ that will be used in the proof.
These are constructed simplicially, and we here give assumptions on simplicial objects in $\sT$ and $\sS$ that suffice for the construction.  

\begin{ass}\label{ass6} We write $s\sV$ for the category of simplicial objects in a category $\sV$. For a functor $F$, let us write $F_*$ for the functor on simplicial objects given by applying $F$ objectwise on $q$-simplices, and similarly for natural transformations. 
We assume the following results about simplicial objects in $\sT$ and $\sS$.
\begin{enumerate}[(i)]
\item  There are realization functors $\bT\colon s\sT\rtarr \sT$ and $\bT\colon s\sS\rtarr \sS$ and they are left adjoints.
\item  The functors $\bT$  on  $s\sT$ and $s\sS$ preserve homotopies.
\item  The functors $\bT$  on  $s\sT$ and $s\sS$ preserve weak equivalences between Reedy cofibrant objects.
\item  Realization commutes with the (left adjoint) functor $\SI$. That is,  for $K_*$ in $s\sT$, there is a natural isomorphism
$\xymatrix@1{  \SI \bT K_* \iso \bT \SI_* K_*}$.
\item   For $K_*\in s\sS$,  the natural map $\ga\colon \bT \OM_* K_*\rtarr \OM \bT K_*$, which is the adjoint of 
 $$ \xymatrix@1{\SI \bT \OM_*K_* \iso \bT \SI_* \OM_* K_* \ar[r]^-{\bT \epz_*} & \bT K_*,\\}$$
 is a weak equivalence if $K_{*}$ is  levelwise $\OM$-connective.
\end{enumerate}
\end{ass}

The appropriate definition of an $\OM$-connective object of $\sS$ in our general context is given in \autoref{LAcon}.  In our examples, none of the conditions (i) through (iv) present any difficulty.
However the condition (v), for the (iterated)  loop space functor on spaces, was one of the hardest results in \cite{MayGeo}.  The general version is the key ingredient in our understanding of homotopical monadicity.  We discuss each of these conditions in turn.

\begin{con}\label{Kancan}  Realization functors $\bT  \colon s\sV\rtarr \sV$ can generally be defined by categorical tensor products
$$\bT K_* =  K_*\otimes_{\DE} \DE^{u}_*.$$ 
Here $\DE$ is the simplex category (so that $K_*$ is a contravariant functor  $\DE\rtarr \sV$), $\sV$ is tensored and cotensored over some ground category $\sU$, and $ \DE^{u}_*$ is a covariant simplex functor $\DE \rtarr \sU$.  Then realization $\bT$ is a left adjoint with right adjoint $\bS$ given on $X\in \sV$ by
$$  \bS(X)_q = \Hom(\DE_q^u, X),$$
where $\Hom$ is the cotensor functor  $\sU^{op}\times \sV \rtarr \sV$.   Letting $\otimes\colon \sV \times \sU \rtarr \sV$ be the tensor functor, we have the adjunction
$$ \sV(V\otimes U, W) \iso \sV(V, \Hom(U,W))$$
for $V,W\in\sV$ and $U\in \sU$.   The categorical tensor products are defined explicitly via coequalizer diagrams
$$ \xymatrix@1{\coprod_{\ph\in \DE(m,n)}  K_n \otimes \DE_m^{u}  \ar@<.7ex>[rr]^{\phi\otimes \id}  \ar@<-.7ex>[rr]_{\id\otimes \ph}  && \coprod_n K_n\otimes \DE_n^{u} \ar[rr]^-{q} & & \bT K_*\\} $$ 
On passage to adjoints, the universal property of these coequalizers directly implies the desired adjunction
$$\sV(\bT K_*, X) \iso s\sV(K_*, \bS X).$$
For example, if $\sV$ is closed symmetric monoidal, we can take $\sU = \sV$ since $\sV$ is tensored and cotensored over itself.  When $\sU=\sV$ is the category of spaces, $\bT$ is the usual geometric realization functor $s\sU\rtarr \sU$.   When $\sV$ is an appropriate category of spectra (e.g. \cite{LMS, EKMM, MMSS}) and $\sU$ is the category of based spaces, the construction gives $\bT\colon s\sV \rtarr \sV$.  For the construction to be of interest, the functor $\DE_*^u$ must be well-chosen.  Very often, as in the cases of spaces and spectra, $(\bT,\bS)$ induces an adjoint equivalence of homotopy categories. With model categorical elucidation, it is often a Quillen equivalence.
\end{con}

\begin{rem}\label{Kancan2}[Historical note]  Simplicial theory arose from an understanding of how adding degeneracy operators overcomes deficiencies of the category of simplicial complexes.  This led Kan [Kan58, Section 3] to define a realization functor from the category  $s\mathbf{Set}$ of simplicial sets to suitable categories $\sV$, such as spaces;  a more recent treatment of  this point of view is given in [Hov99, Proposition 3.1.5].  Replacing $K_*\in s\sV$ by $K_* \in s\mathbf{Set}$ in \autoref{Kancan} gives Kan's definition of realization $|-|$ as a functor  $s\mathbf{Set}  \rtarr \sV$.  This is more familiar than the variant $\bT$ in \autoref{Kancan} that is relevant to our axioms.   When $\sU$ is the category of spaces, $|-|$ is the usual geometric realization functor $s\mathbf{Set}  \rtarr \sU$, and we shall formalize the relation between $\bT$ and $|-|$ in \autoref{SecSimp}.
\end{rem}

\begin{rem}\label{Homotopy} The notion of homotopy in $s\sV$ can  vary.  There is  a standard combinatorial notion that makes sense for any $\sV$
  (e.g. \cite[Definitions 5.1]{Mayss}).  For maps $f, g\colon K_* \rtarr L_*$ in  $s\sV$, a {\em{classical simplicial homotopy}} 
  $h\colon f\htp g$ is a sequence of maps
  $h_i\colon K_q\rtarr L_{q+1}$ in $\sV$ such that $d_0h_0 = f_q$, $d_{q+1}h_q = g_q$, and certain commutation relations relating the $h_i$ to the $d_i$ and $s_i$ are satisfied. In simplicial sets, or more generally if $s\sV$ has a
  product tensor over sets and therefore over simplicial sets, this is the same as a map $K_* \times \DE^s_1
  \rtarr L_*$ in $s\sV$ \cite[Proposition 6.2]{Mayss}.   As holds very
generally in simplicial model categories (e.g. \cite[Proposition 5.4]{RSS}), we
have an isomorphism $\bT(K_*\otimes X) \iso \bT K_*\otimes |X|$  for $K_*\in s\sV$ and $X\in sSet$, so that $\bT$ preserves homotopies when these are defined using $|\DE_1^s|$.

However, $s\sV$ often has a product tensor over another category $\sU$ that itself has an interval object $I$.  For example, $\sU$ might be the category of 
 (compactly generated) spaces.  Then we have a combinatorially quite different notion of a {\em{topological simplicial homotopy}} $h\colon f\htp g$.  It is given by a map  $h\colon K_*\times I \rtarr L_*$ in $s\sV$ such that $h_q(k,0) = f(k)$ and $h_q(k,1) = g(k)$ for all $k\in K_q$.  We again often have an isomorphism $\bT(K_*\otimes X) \iso \bT K_*\otimes X$ for $K_*\in s\sV$ and $X\in \sU$, so that realization also preserves this underutilized alternative notion of homotopy.   
 
This situation is often encountered and seldom articulated.  We are
free to use both kinds of simplicial homotopies in $s\sV$ for the same $\sV$.  With $\sV = \sU = \mathbf{Space}$, we give a very simple example in \autoref{SecSimp}.  There we can replace $X$ by the constant simplicial space $c_*X$ at $X$, so that $\bT(c_*X) = X$, and use that $\bT\colon s\sU\rtarr \sU$ commutes with products. 
\end{rem}

\begin{rem}\label{assrems} We remark on (i)-(v) in order.
\begin{enumerate}[(i)]  
\item This holds in all examples where \autoref{Kancan} applies.
\item Using \autoref{Homotopy}, this holds in all examples we know of where $s\sV$ and $\sV$ have reasonable notions of homotopy.
\item  For simplicial spaces, using earlier language, this  goes back to  \cite[Theorem A.4]{MayPerm}.  See \cite[Lemma 0]{Fied} for a compendium of this and related results. There are many more recent and more general proofs using Reedy cofibrancy, for example \cite[VII.3.6]{GoJa}.  
\item  Some examples of such commutation go all the way back to \cite{KanAdj2}.  Standard space level examples are in  \cite[Proposition 12.1]{MayGeo} and in \cite{KMZ1}.
\item As said before, this assumption is substantial. For spaces and the loop functor, it is proven in \cite[Theorems 12.3 and 12.7]{MayGeo}.   For $G$-spaces, the analog is proven in \cite{CW, Haus} by fixed point reduction to the nonequivariant case.  It is shown how to pass to spectra in \cite{Rant1},  and the same argument works for $G$-spectra \cite{GM3}.  Variants of the following remark give a starting point that applies in these cases and many others, but does not apply in general. 
\end{enumerate}
\end{rem}

\begin{rem}  Assume for the moment that $(\SI,\OM)$ is the standard (suspension, loops) adjunction in spaces.  We can then use the contractibility of path spaces to construct a commutative diagram from a given simplicial space $X_*$.
\begin{equation}\label{Nuts}
\xymatrix{
 \bT(\OM_*X_*) \ar[d] \ar[r]^{\ga} & \OM\bT(X_*) \ar[d] \\
 \bT(P_*X_*) \ar[d] \ar[r] & P \bT(X_*)   \ar[d] \\
 \bT(X_*) \ar[r]_{=} & \bT(X_*). \\}
\end{equation}
The middle arrow is obviously a weak equivalence since the source and target are contractible. The right column is a fibration and its fiber, and the left column is a quasi-fibration and its fiber when $X_*$ is levelwise connected.  Thus $\ga$ is a weak equivalence by a comparison of fiber sequences.
\end{rem}
 
We will often use the following notation and observations.

\begin{notn}  For any category $\sV$, write $c_*$ for the constant simplicial object functor $\sV\rtarr s\sV$.   
\end{notn}

\begin{rem}  For all realization functors $\bT$ that come from \autoref{Kancan}, it is clear that 
$\bT\com c_* $ is (naturally isomorphic to) the identity functor.
It is hard to imagine a realization functor $\bT$ for which this does not hold, but we do not have a proof that the axioms imply it.   Therefore, as an additional mini axiom, we assume henceforward that we have an identification of $\bT c_*X$ with $X$.\footnote{We thank Ajay Srinivasan for alerting us to this ambiguity.}
\end{rem}

\begin{lem}\label{triv}
For a simplicial object $K_*$ in a category $\sT$ and an object $J$ of $\sT$, a map $f\colon K_0 \rtarr J$ such that $f\circ d_0 = f\circ d_1$ induces a map 
$\xi_*(f) = \xi_*\colon K_*\rtarr c_* J$ and thus induces a map $\xi\colon \bT(K_*) \rtarr J$ in $\sT$.
\end{lem}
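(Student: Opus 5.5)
We define $\xi_*$ levelwise and check that it is simplicial. The map $K_0\rtarr J$ is given as $f$. For $q\geq 0$ we need a map $\xi_q\colon K_q\rtarr J$. The natural choice is to compose with any iterated face map to $K_0$, say the ``last vertex'' map $\nu_q\colon K_q\rtarr K_0$ induced by the map $[0]\rtarr[q]$ in $\DE$ sending $0$ to $q$, and set $\xi_q = f\com\nu_q$. For $q=0$ this gives $\xi_0=f$.

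Since all faces and degeneracies of $c_*J$ are identities, a family $\{\xi_q\}$ is a map of simplicial objects $K_*\rtarr c_*J$ exactly when $\xi_{q-1}\com d_i = \xi_q$ and $\xi_{q+1}\com s_i=\xi_q$ for all $i$. Equivalently, $f\com K(\al) = f\com K(\be)$ for any two maps $\al,\be\colon[0]\rtarr[q]$, that is, for any two vertices. The degeneracy condition follows once the face condition holds, because $d_is_i=\id$:
\[
\xi_{q+1}s_i = \xi_q d_i s_i = \xi_q.
\]
So the key step is to show that $f$ composed with the map induced by any vertex $[0]\rtarr[q]$ is independent of the vertex.

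For this, take two vertices $a<b$ of $[q]$. They factor through the edge $[1]\rtarr[q]$ with image $\{a,b\}$, so their induced maps are $d_1\com e$ and $d_0\com e$, where $e\colon K_q\rtarr K_1$ is the edge map. (Under the contravariant functor, vertex $0$ of $[1]$ gives $d_1$ and vertex $1$ gives $d_0$.) The hypothesis $f d_0=f d_1$ then gives
\[
f\com d_1\com e = f\com d_0\com e .
\]
Hence $f\com K(\al)$ depends only on $q$, and both simplicial identities hold. This combinatorial bookkeeping is the only real content of the argument. I expect the main place to be careful is the identification of vertex maps with composites through edges, which is a standard fact about $\DE$.

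Finally, apply the realization functor $\bT$ to $\xi_*$ and use the identification $\bT c_*J\iso J$ assumed as the mini axiom above. This gives
\[
\xi\colon \bT K_*\rtarr \bT c_*J\iso J .
\]
Naturality in $f$ is evident from the construction.
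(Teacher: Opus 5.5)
Your proof is correct. The paper states this lemma without proof and treats it as routine; your argument is the standard verification it implicitly relies on. You set $\xi_q = f\circ K(v)$ for a vertex $v\colon[0]\to[q]$, show vertex-independence by factoring two vertices through an edge $[1]\to[q]$, then realize and use the mini axiom $\bT c_*J\cong J$. The same naturality computation also shows that $\xi_*$ is the unique simplicial map with $\xi_0=f$, which justifies the word ``induces.''
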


\subsection{The two-sided bar construction and the derived $\overline{Y}$}\label{Ybar} The monadic two-sided bar construction was introduced in \cite{MayGeo}. We have the monad $(\GA, \mu, \et)$, hence the $\GA$-functor $(\SI, \epz)$ with action $\epz\colon \SI \GA \rtarr \SI$, and we consider $\GA$ as a  $\GA$-functor with action $\mu = \OM \epz$.  For a $\GA$-algebra $(Y,\tha)$, we have the 
two-sided bar constructions 
\begin{equation}\label{guys}
\overline{Y} = B(\GA,\GA, Y) \ \  \text{and} \ \ \bE  Y = B(\SI, \GA, Y),
\end{equation}
the first in $\sT$ and the second in $\sS$. These are realizations of simplicial objects $B_*(\GA,\GA,Y)$ with $q$-simplices  $\GA^{q+1}Y$ and $B_*(\SI, \GA, Y)$ with $q$-simplices $\SI \GA^q Y$.  The $0$th face, $i$th face for 
$1\leq i \leq q-1$, and $q$th face are given by the action of $\GA$, by $\mu$ in successive positions, and by $\tha$.  The degeneracies are given by $\et\colon \Id\rtarr \GA$.  We sometimes abbreviate notation by writing 
\begin{equation}\label{guys*}
\overline{Y}_{*} = B_*(\GA,\GA, Y) \ \  \text{and} \ \ \bE_* Y = B_*(\SI, \GA, Y).\\
\end{equation}

We make the blanket assumption that our simplicial bar constructions are Reedy cofibrant, so that \autoref{ass6}(iii) applies. However, that is not needed for the following lemma, which instead uses \autoref{ass6}(ii).  It is proven by a standard ``extra degeneracy" argument (e.g. \cite[Proposition 9.8]{MayGeo}), using \autoref{triv} and the first notion of simplicial homotopy in \autoref{Homotopy}. 

\begin{lem}\label{extra}   For $Y\in \GA[\sT]$,  the maps  $\mu = \OM \epz\colon \GA^{q+1} \rtarr \GA^q$ induce a natural map $\ze\colon \overline{Y} \rtarr Y$ in $\sT$. It is a homotopy equivalence with homotopy inverse  $\nu\colon Y \rtarr \overline{Y}$ induced by the maps  $\et\colon Y \rtarr \GA^{q+1} Y$ .
\end{lem}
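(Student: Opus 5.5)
The plan is to construct $\ze$ and $\nu$ as realizations of simplicial maps to and from the constant simplicial object $c_*Y$. Then I exhibit an extra-degeneracy contraction of $\overline{Y}_*$ onto $c_*Y$, which gives a classical simplicial homotopy $\nu_*\ze_*\htp \id$. Finally I realize it using \autoref{ass6}(ii).

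\textbf{Step 1: the map $\ze$.} The relevant $0$-simplices map is $\tha\colon \GA Y\rtarr Y$. (The iterated products $\GA^{q+1}Y\rtarr \GA Y$ composed with $\tha$ give the level-$q$ components.) On $\overline{Y}_1=\GA\GA Y$ the two faces $d_0=\mu$ and $d_1=\GA\tha$ satisfy $\tha\com\mu=\tha\com\GA\tha$; this is exactly the associativity of the action. \autoref{triv} therefore gives $\ze_*\colon \overline{Y}_*\rtarr c_*Y$. Realizing and using the mini axiom $\bT c_*Y\iso Y$ gives $\ze\colon\overline{Y}\rtarr Y$. Naturality in $Y$ is clear.

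\textbf{Step 2: the map $\nu$.} Define $\nu_*\colon c_*Y\rtarr \overline{Y}_*$ in level $q$ by the iterated unit $\et^{q+1}\colon Y\rtarr \GA^{q+1}Y$. Checking that this is simplicial is a routine use of the unit identities:
\begin{itemize}
\item $\mu\com\et\GA=\id=\mu\com\GA\et$ handles the inner faces;
\item $\tha\com\et=\id$ handles the last face;
\item naturality of $\et$ handles the degeneracies.
\end{itemize}
Then $\ze_*\nu_*=\id$ already at the simplicial level, since $\tha\et=\id$. Hence $\ze\nu=\id$ after realization.

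\textbf{Step 3: the contraction.} The substantive step is $\nu\ze\htp\id$. Here I would use the extra degeneracy $s_{-1}=\et\colon \GA^{q+1}Y\rtarr\GA^{q+2}Y$, inserting $\et$ in the outermost position. This satisfies $d_0s_{-1}=\id$ (because $\mu\com\et\GA=\id$) together with the standard relations $d_is_{-1}=s_{-1}d_{i-1}$ and $s_is_{-1}=s_{-1}s_{i-1}$. From these one writes down, by the standard formulas (as in \cite[Proposition 9.8]{MayGeo}), a classical simplicial homotopy $h_i\colon\overline{Y}_q\rtarr\overline{Y}_{q+1}$ between $\id$ and $\nu_*\ze_*$. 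Concretely, $h_i$ is the iterate $s_{-1}^{i+1}$ composed with the appropriate faces.

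\textbf{Step 4: realization.} Because all these maps are built from $\et$ and $\mu$, the relations are diagrammatic identities valid in any category. \autoref{ass6}(ii), via \autoref{Homotopy}, then says that $\bT$ carries this classical simplicial homotopy to a homotopy $\nu\ze\htp\id$ in $\sT$.

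I expect the main obstacle to be the bookkeeping in Step 3. One must verify the commutation relations between the $h_i$ and all faces and degeneracies, and check that the endpoint is exactly $\nu_*\ze_*$ rather than merely something homotopic to it. I would handle this by first proving the abstract statement: an augmented simplicial object with extra degeneracies is homotopy equivalent to the constant object at its augmentation. I would then specialize it to $\overline{Y}_*$, which is augmented over $Y$ by $\tha$.
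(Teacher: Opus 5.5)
Your proposal is correct and matches the paper's proof. The paper simply cites the standard extra degeneracy argument of \cite[Proposition 9.8]{MayGeo}: it uses \autoref{triv} to construct $\ze$ and realizes the resulting classical simplicial homotopy via \autoref{ass6}(ii), which is exactly what you spell out. For the bookkeeping in Step 3, the formula $h_j = s_{-1}^{j+1}d_0^{j}$ works, where $d_0^{q+1}\colon \overline{Y}_q\rtarr Y$ is the composite down to the augmentation. It satisfies all the required identities, with $d_0h_0=\id$ and $d_{q+1}h_q=\nu_*\ze_*$ holding exactly, not just up to homotopy.
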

\begin{rem} Clearly $\et$ and $\nu$ are only maps in $\sT$, not in $\GA[\sT]$.  While the maps $\GA^{q+1} Y \rtarr Y$ are maps of $\GA$-algebras, we have not defined a $\GA$-algebra structure on $B(\GA,\GA, Y)$ since $\OM$ and therefore $\GA$ do not generally commute with $\bT$.   An alternative line of development might start model theoretically and replace our $\overline{Y} $ with a cofibrant replacement of $Y$ in a model category of $\GA$-algebras.  Instead, we shall elaborate what structure $\overline Y$ does have in \autoref{OPLAX}. 
\end{rem}

\subsection{$\OM$-connectivity}\label{Zbar}

\begin{notn}\label{notnbarZ} 
Write $\hat{\GA} = \SI\OM$ and let $Z\in\sS$.  Observe that $\SI \GA^q \OM Z = \hat{\GA}^{q+1}Z$ and that all faces of the simplicial object $B_*(\SI,\GA,\OM Z)$ are given by $\epz$.  This is ``dual'' to $\overline{Y}$, and we define 
\begin{equation}\label{guys2}
\widehat{Z} = B(\SI,\GA, \OM Z) = \bE\OM_{\GA} Z.
\end{equation}
We sometimes abbreviate notation by writing
\begin{equation}\label{guys2*}
\widehat{Z}_* = B_*(\SI,\GA, \OM Z) = \bE_*\OM_{\GA} Z.
\end{equation} 
\end{notn}
Application of \autoref{triv}  leads to the following analog of \autoref{extra}.

\begin{defn} \label{LAcon} For $Z\in \sS$,  the maps $\epz\colon \hat{\GA}^{q+1}  \rtarr \hat{\GA}^q$ induce a natural map $\xi\colon \widehat{Z}\rtarr Z$ in $\sS$. We say that $Z$ is $\OM$-connective if $\xi$ is a weak equivalence.  We let $\sS_{c}$ denote the full subcategory of $\OM$-connective objects in $\sS$.
\end{defn}

\begin{lem}\label{LAcon2} For any object $X$ of $\sT$, $\SI X$ is an $\OM$-connective object of $\sS$.  
\end{lem}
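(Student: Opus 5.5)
We must show $\xi\colon \widehat{\SI X}\rtarr \SI X$ is a weak equivalence. The plan is to identify $\widehat{\SI X}$ with $\SI$ applied to a bar construction in $\sT$ that has an extra degeneracy, and then use the fact that $\SI$ commutes with realization.

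First, for $Z=\SI X$ we have $\OM Z = \GA X$, the free $\GA$-algebra. Hence
\[
\widehat{\SI X}_* = B_*(\SI,\GA,\GA X) = \SI_* B_*(\GA,\GA,\GA X),
\]
where the $q$-simplices are $\SI\GA^{q+1}X$ and the faces and degeneracies are $\SI$ applied to those of $\overline{\GA X}_*$. Note that the $0$th face $\epz\colon \SI\GA\GA^{q}X\rtarr \SI\GA^qX$ equals $\SI\mu$ on these objects, because $\epz_{\SI}=\SI\OM\epz$ by naturality, up to the triangle identity. Granting this, \autoref{ass6}(iv) gives an isomorphism $\widehat{\SI X}\iso \SI\,\overline{\GA X}$.

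Next, we check that under this isomorphism the map $\xi$ of \autoref{LAcon}, which is induced via \autoref{triv} by $\epz\colon \SI\OM\SI X\rtarr \SI X$ on $0$-simplices, corresponds to $\SI\ze$. Here $\ze\colon \overline{\GA X}\rtarr \GA X$ is the map of \autoref{extra}, and we compose with the identification $\SI X\iso \SI_{\GA}$-type terms. More precisely, $\xi$ corresponds to $\SI X \leftarrow \SI\GA X$ via $\epz_{\SI X} = \SI\OM\epz$. Equivalently, we work directly with the extra degeneracy. The simplicial object $\widehat{\SI X}_*$ is augmented over $c_*\SI X$, and it has an extra degeneracy $s_{-1}=\SI\et\colon \SI\GA^{q}X\rtarr \SI\GA^{q+1}X$ placed in the last position, acting on $X$. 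This satisfies the identities $d_{q+1}s_{-1}=\id$, etc., exactly as in the proof of \autoref{extra}. The cleanest route is therefore to mimic the proof of \autoref{extra} directly in $s\sS$. The map $\SI\et\colon \SI X\rtarr \SI\GA X$ induces $\nu\colon c_*\SI X\rtarr \widehat{\SI X}_*$, with $\xi_*\nu = \id$ by the triangle identity $\epz\com\SI\et=\id$. The extra degeneracy then yields a classical simplicial homotopy $\nu\xi_*\htp \id$.

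Finally, by \autoref{ass6}(ii) and the identification $\bT c_* = \Id$, $\xi$ is a homotopy equivalence in $\sS$, hence a weak equivalence, since homotopy equivalences are assumed to be weak equivalences in our standard setup.

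The main obstacle is the bookkeeping of which end carries the extra degeneracy. The faces at position $q$ use the algebra action $\mu$ on the free algebra $\GA X$, so the contraction must be built from $\et$ inserted next to $X$ (that is, $\GA^{q}\et$-type maps, as in \cite[Proposition 9.8]{MayGeo}), not from $\SI\et$ on the left. I would write the homotopy $h_i$ explicitly and check the simplicial identities, using only naturality of $\et$ and the monad unit laws.
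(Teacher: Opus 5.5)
Your final argument is the paper's proof. The paper also uses the extra degeneracy in the right variable: the maps $\SI\GA^{q}\et_X\colon \SI\GA^{q}X\rtarr \SI\GA^{q+1}X$ inserted next to $X$, together with the augmentation $\xi_0=\epz_{\SI X}$, give a classical simplicial homotopy equivalence between $\widehat{\SI X}_*$ and $c_*\SI X$. This realizes to a homotopy equivalence by \autoref{ass6}(ii).

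The contraction is the mirror image of the one in \autoref{extra}, not literally the same. The would-be analogue of the left extra degeneracy, $\SI\et$ on the left, fails $d_1s_{-1}=s_{-1}d_0$, because $\SI\et_X\com\epz_{\SI X}\neq\id$. Your last paragraph in effect observes this.

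You should, however, delete your opening route, since it rests on false claims.

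First, $\SI_*B_*(\GA,\GA,\GA X)$ has $q$-simplices $\SI\GA^{q+2}X$, so it is not $B_*(\SI,\GA,\GA X)$.

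More seriously, the $0$th face $\epz_{\SI\GA^{q}X}$ of $B_*(\SI,\GA,\GA X)$ is not $\SI\mu$, and in general it is not $\SI$ of any map of $\sT$.
\begin{itemize}
\item Naturality of $\epz$ only says that $\epz_Z$ coequalizes the two maps $\epz_{\SI\OM Z}$ and $\SI\OM\epz_Z$ from $\SI\OM\SI\OM Z$ to $\SI\OM Z$.
\item The triangle identities only give these two maps the common section $\SI\et_{\OM Z}$.
\item They are distinct maps; indeed they are exactly the pair whose coequalizer defines $\SI_{\GA}$.
\item They already differ for the free monoid adjunction between sets and monoids. On a list of lists of lists, one concatenates the outer list while the other flattens each entry.
\end{itemize}
Similarly, the equation ``$\epz_{\SI X}=\SI\OM\epz$'' does not even typecheck.

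So $\widehat{\SI X}_*$ is not $\SI_*$ of a simplicial object of $\sT$, and there is no reduction to $\SI\ze$ via \autoref{ass6}(iv). None of this is needed: the extra degeneracy argument stands on its own.
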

\begin{proof} Since $\widehat{\SI X} = B(\SI,\GA,\OM\SI X) = B(\SI,\GA,\GA X)$, $\xi\colon \widehat{\SI X} \rtarr \SI X$ is an equivalence by another application of the extra degeneracy argument, with the extra degeneracy now appearing in the right variable rather than the left variable.
\end{proof}

The following result has a slightly sketchy proof, but it applies in all examples we know.

\begin{lem}\label{LAcon3}  Realization on $\sS$ takes levelwise $\OM$-connective objects to $\OM$-connective objects.  Therefore, by \autoref{LAcon2},  $\widehat{Z}$ is itself $\OM$-connective for all $Z\in \sS$. 
\end{lem}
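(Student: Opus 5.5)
Let $K_*$ be a simplicial object in $\sS$ with each $K_q$ $\OM$-connective. The goal is to show that $\xi\colon \widehat{\bT K_*}\rtarr \bT K_*$ is a weak equivalence. The plan is to compare it with the realization of the levelwise maps $\xi_q\colon \widehat{K_q}\rtarr K_q$ by passing to bisimplicial objects.

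First, apply the construction $\widehat{(-)}_*$ levelwise to get a bisimplicial object $\widehat{K}_{*,*}$ with $(p,q)$-simplices $\hat{\GA}^{p+1}K_q = \SI\GA^p\OM K_q$. Realizing in the $p$-direction gives the simplicial object $[q]\mapsto \widehat{K_q}$. The maps $\xi_q$ assemble by naturality into a map of simplicial objects $\xi_*\colon \widehat{K_*}\rtarr K_*$. Each $\xi_q$ is a weak equivalence by hypothesis, so by \autoref{ass6}(iii) and the blanket Reedy cofibrancy assumption, $\bT\xi_*\colon \bT(\widehat{K_*})\rtarr \bT K_*$ is a weak equivalence.

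Second, identify $\bT(\widehat{K_*})$ with $\widehat{\bT K_*}$ up to weak equivalence. Realizing the bisimplicial object in the other order, we first realize in $q$ for fixed $p$ and use \autoref{ass6}(iv) repeatedly:
\[\bT_q\,\SI\GA^p\OM K_q \iso \SI\bT_q\,\GA^p\OM K_q = \SI\bT\OM_*(\SI\OM)^{p}_*K_* .\]
Iterating the comparison map $\ga$ of \autoref{ass6}(v) gives a natural map
\[\bT(\GA^p_*\OM_* K_*)\rtarr \GA^p\OM\bT K_* .\]
To see that it is a weak equivalence, factor it through the maps $\ga$ applied to the simplicial objects $\OM_*K_*$, $\hat{\GA}_*K_*$, $\hat{\GA}^2_*K_*,\dots$. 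The objects $K_q$ are $\OM$-connective by hypothesis, and the objects $\hat{\GA}^jK_q = \SI(\cdots)$ are $\OM$-connective by \autoref{LAcon2}. Hence each of these simplicial objects is levelwise $\OM$-connective, and \autoref{ass6}(v), together with the fact that $\SI$ and $\OM$ preserve weak equivalences, shows each factor is a weak equivalence. These maps are compatible with faces and degeneracies because $\ga$ is natural and compatible with $\epz$, $\et$ and $\mu$. We therefore get a levelwise weak equivalence of simplicial objects in the $p$-direction,
\[ [p]\mapsto \bT_q\widehat{K}_{p,q}\ \rtarr\ \hat{\GA}^{p+1}\bT K_* = \widehat{(\bT K_*)}_p .\]
Realizing again and using \autoref{ass6}(iii) gives a weak equivalence onto $\widehat{\bT K_*}$.

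Third, combine the two steps via the interchange isomorphism for iterated realizations of a bisimplicial object, $\bT_p\bT_q\iso\bT_q\bT_p$, which follows from (i) since both are coends, or via the diagonal. Then check that the resulting triangle with $\xi$ commutes, so that two out of three gives that $\xi$ is a weak equivalence. The second assertion follows at once: $\widehat{Z}=\bT\widehat{Z}_*$, and $\widehat{Z}_q=\SI\GA^q\OM Z$ is $\OM$-connective by \autoref{LAcon2}.

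The main obstacle is the second step, and this is where the sketchiness lies. The map $\ga$ is only a weak equivalence, not an isomorphism, and it has to be iterated and shown compatible with all the simplicial structure maps. The argument also needs Reedy cofibrancy of the intermediate simplicial objects and commutation of realizations in the two directions, neither of which is literally among the axioms. I would verify these in the examples and state them as mild additional hypotheses.
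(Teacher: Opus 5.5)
Your argument has the same skeleton as the paper's: the bisimplicial object $\widehat{K}_{*,*}$, interchange of the two realizations, a comparison with $\widehat{\bT K_*}$ built from $\ga$ and justified by \autoref{ass6}(iii)--(v), and two out of three in the triangle formed by $\bT\xi_*$ and $\xi$. Your middle step is in fact more careful than the paper's, which asserts $\bT B(\SI,\GA,\OM_*Z_*)\iso B(\SI,\GA,\bT\OM_*Z_*)$ and applies a single $\ga$, tacitly commuting $\bT$ past $\GA$ even though $\bT\OM_*Z_*$ need not be a $\GA$-algebra; your iterated $\ga$ actually justifies this step, using \autoref{LAcon2} for the levelwise $\OM$-connectivity of $\hat{\GA}^j_*K_*$ and the compatibilities of $\ga$ with $\epz$ and $\et$ from \autoref{oplaxprop2} for the faces and degeneracies (one small correction: the interchange $\bT_p\bT_q\iso\bT_q\bT_p$ comes from the coend description in \autoref{Kancan3}, not from (i) alone).
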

\begin{proof} Let $Z_*\in \sS$ with each $Z_q$ $\OM$-connective. Thus each
$B(\SI,\GA,\OM Z_q) \rtarr Z_q$
is a weak equivalence.   We have made the blanket assumption that our simplicial objects are Reedy cofibrant.  The folklore result (see \cite[Proposition B.1]{BF} and Lemma 3.8 below)  that realization of bisimplicial objects can be obtained by first realizing vertically and then horizontally or first horizontally and then vertically shows that
$$ \bT B(\SI,\GA, \OM_*Z_*)\iso B(\SI,\GA, \bT \OM_*Z_*).$$
Applying $\ga$ to the third variable and using Assumptions A(iii) and A(v), we have a weak equivalence
$$ B(\SI,\GA, \bT \OM_*Z_*) \rtarr B(\SI,\GA, \OM \bT Z_*).$$
Composing, there results a commutative diagram with top arrow a weak equivalence
$$
\xymatrix{
\bT B(\SI,\GA,\OM_*Z_*) \ar[dr]_{\bT \xi_*} \ar[rr]  & &  B(\SI,\GA,\OM \bT Z_*) \ar[dl]^{\xi} \\
& \bT Z_*  & \\}
$$
The left arrow $\bT \xi_*$ is the realization of a levelwise weak equivalence and is thus a weak equivalence by \autoref{ass6}(iii).   By two out of three,  the right arrow $\xi$ is therefore also a weak equivalence. 
\end{proof}

\begin{prop}\label{LAcon4} If $Z$ and $Z'$ are $\OM$-connective, then  a map $f\colon Z \rtarr Z'$ in $\sS$ is a weak equivalence if and only if $\OM f\colon \OM Z \rtarr \OM Z'$ is a weak equivalence in $\sT$. 
\end{prop}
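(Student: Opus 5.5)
The forward implication is immediate, since $\OM$ preserves weak equivalences by our standing assumptions. For the converse, assume $\OM f$ is a weak equivalence. The plan is to compare $Z$ and $Z'$ through the resolutions $\widehat{Z}$ and $\widehat{Z'}$ of \autoref{notnbarZ}, using naturality of $\xi$.

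First, naturality of the maps $\epz\colon \hat{\GA}^{q+1}\rtarr \hat{\GA}^q$ in $Z$, together with \autoref{triv}, gives a commutative square
$$
\xymatrix{
\widehat{Z} \ar[r]^-{\widehat{f}} \ar[d]_{\xi} & \widehat{Z'} \ar[d]^{\xi} \\
Z \ar[r]_-{f} & Z', \\}
$$
where $\widehat{f} = \bT B_*(\SI,\GA,\OM f)$. By hypothesis the vertical arrows are weak equivalences. By two out of three, it therefore suffices to show that $\widehat{f}$ is a weak equivalence.

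Next, on $q$-simplices $\widehat{f}_*$ is the map $\SI\GA^q\OM f\colon \SI\GA^q\OM Z\rtarr \SI\GA^q\OM Z'$. Since $\OM f$ is a weak equivalence and $\SI$ and $\GA$ preserve weak equivalences, each of these maps is a weak equivalence. So $\widehat{f}_*$ is a levelwise weak equivalence between simplicial objects of $\sS$. These simplicial bar constructions are Reedy cofibrant by our blanket assumption, so \autoref{ass6}(iii) shows that the realization $\widehat{f}$ is a weak equivalence, which completes the argument.

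The main obstacle I anticipate is the proviso that $\SI$ and $\GA$ preserve weak equivalences only between good objects. One has to check that the iterates $\GA^q\OM Z$ remain good, or else build this into the blanket Reedy cofibrancy and goodness assumptions, as the paper implicitly does in the proof of \autoref{LAcon3}. Apart from that, the argument is formal.
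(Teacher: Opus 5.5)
Your proposal is correct and essentially matches the paper's proof: the paper also gets $\bE\OM f$ to be a weak equivalence from \autoref{ass6}(iii), then applies two out of three to the naturality relation $f\com\xi = \xi\com\bE\OM f$. Your explicit check that each $\SI\GA^q\OM f$ is a weak equivalence, and your caveat about good objects, just spell out what the paper leaves implicit in its appeal to \autoref{ass6}(iii).
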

\begin{proof}   Only if holds since $\OM$ is assumed to preserve weak equivalences.  Conversely, assume that $\OM f$ is a weak equivalence.  Then $\bE \OM f$ is a weak equivalence by \autoref{ass6}(iii).   Since $f \com \xi = \xi\com \bE\OM f$ by naturality and both maps $\xi$ are weak equivalences since $Z$ and $Z'$ are $\OM$-connective, $f$ is a weak equivalence by the two out of three property.  
\end{proof}

\begin{rem}\label{LAcon5}   For $Y\in \GA[\sT]$, we think of $\overline{Y}$ as a derived approximation of $Y$.  For $Z\in \sS$, we think of $\widehat{Z}$ as a derived connective cover of $Z$.   They are related by the following diagram, the commutativity of which is checked by use of \autoref{triv}.   
$$
\xymatrix{ \overline{\OM Z} = B(\GA,\GA,\OM Z) \ar[rr]^-{\ga} \ar[dr]_{\ze} & &  \OM B(\SI,\GA,\OM Z) = \OM\widehat{Z} \ar[dl]^{\OM\xi} \\
& \OM Z & \\}
$$
Here $\ze$ is always an equivalence, hence $\ga$ is a weak equivalence if and only if  $\OM\xi$ is a weak equivalence.  Since that is true if and only if $Z$ is $\OM$-connective, this shows the necessity of an assumption concerning $\OM$-connectivity in \autoref{ass6}(v). 
\end{rem}

The following two examples may make \autoref{LAcon} look more intuitive than it does at first sight.

\begin{exmp}[Dold-Kan correspondence]  Let $\SI$ be the normalized chain complex functor $sAb \rtarr Ch$ from simplicial abelian groups to (normalized) $\bZ$-chain complexes.  Let $\DE^{ch}\colon \DE \rtarr Ch$ denote the chain complex functor given by the classical simplicial complexes $\DE_n$  and let $\OM\colon Ch\rtarr sAb$  send a chain complex $C_*$ to the simplicial Abelian group with $q$-simplices the chain maps $\DE^{ch}_q \rtarr C_*$ under sums.  Then $\SI$ takes values in nonnegatively graded chain complexes and we see by unravelling definitions that $\xi$ is a quasi-isomorphism (= weak equivalence) on nonnegatively graded chain complexes, as was already proven in  \cite{KanAdj2}.
\end{exmp}

\begin{exmp}[spectra]\label{specon} Using a category of spectra with a  good zeroth space adjunction $(\SI^{\infty},\OM^{\infty})$,  $B(\SI^{\infty}, \GA, \OM^{\infty} Z)$ is a connective cover of a spectrum $Z$, so that $\xi$ is a weak equivalence if and only if the spectrum $Z$ is connective in the classical sense of having homotopy groups zero in negative degrees \cite{KMZ1}. 
\end{exmp}

\subsection{The crude homotopical monadicity theorem}\label{Crude}

\begin{thm}[Crude Homotopical Monadicity]\label{HomMon} Under our assumptions, the functors $\bE\colon \GA[\sT] \rtarr \sS_{c}$ and $\OM_{\GA}\colon \sS_{c} \rtarr \GA[\sT]$ preserve weak
equivalences.  For $Z\in \sS_{c}$,  $\xi\colon \bE \OM_{\GA} Z = \widehat{Z}\rtarr Z$
is a natural weak equivalence.  For $Y\in \GA[\sT]$, we have natural weak equivalences in $\sT$
$$ \xymatrix@1{ Y & \overline{Y} \ar[l]_-{\ze}  \ar[r]^-{\ga} & \OM\bE Y. \\}$$
\end{thm}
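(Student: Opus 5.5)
Most of the claims in \autoref{HomMon} follow from the lemmas already proved, so the plan is largely a matter of assembling them.

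\textbf{Step 1: $\bE$ lands in $\sS_c$.} For $Y\in\GA[\sT]$, the $q$-simplices of $\bE_*Y=B_*(\SI,\GA,Y)$ are $\SI\GA^qY$. Each is $\OM$-connective by \autoref{LAcon2}. Hence $\bE Y=\bT\bE_*Y$ is $\OM$-connective by \autoref{LAcon3}.

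\textbf{Step 2: both functors preserve weak equivalences.} Let $f\colon Y\rtarr Y'$ be a weak equivalence of $\GA$-algebras. Since $\SI$ and $\GA$ preserve weak equivalences, $\bE_*f$ is levelwise the weak equivalence $\SI\GA^qf$. By the blanket Reedy cofibrancy assumption and \autoref{ass6}(iii), $\bE f$ is a weak equivalence. For $\OM_{\GA}$ this is simply the standing hypothesis that $\OM$ preserves weak equivalences, since weak equivalences in $\GA[\sT]$ are created in $\sT$.

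\textbf{Step 3: the counit-side equivalence.} For $Z\in\sS_c$, the map $\xi\colon\bE\OM_{\GA}Z=\widehat Z\rtarr Z$ is a weak equivalence by the very definition of $\sS_c$ (\autoref{LAcon}). Its naturality comes from the naturality of the construction in \autoref{triv}.

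\textbf{Step 4: the unit side.} The map $\ze\colon\overline Y\rtarr Y$ is a homotopy equivalence by \autoref{extra}, hence a weak equivalence (I take it that homotopy equivalences are weak equivalences in the assumed homotopy theory). The remaining and main point is to define $\ga\colon\overline Y\rtarr\OM\bE Y$ and show it is a weak equivalence. Note that $\overline Y_*=B_*(\GA,\GA,Y)=\OM_*B_*(\SI,\GA,Y)=\OM_*\bE_*Y$ as simplicial objects. The faces match: the $0$th face $\mu=\OM\epz$ is $\OM$ applied to the action $\epz$ of $\SI$, and the other faces are $\OM$ of the corresponding faces of $\bE_*Y$. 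The degeneracies match in the same way. So $\ga$ is the map $\ga\colon\bT\OM_*K_*\rtarr\OM\bT K_*$ of \autoref{ass6}(v) with $K_*=\bE_*Y$, which makes it natural in $Y$. Because $K_*$ is levelwise $\OM$-connective by \autoref{LAcon2}, \autoref{ass6}(v) gives that $\ga$ is a weak equivalence.

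\textbf{Expected obstacle.} The step I expect to need the most care is the identification $\overline Y_*\iso\OM_*\bE_*Y$ as simplicial objects, checking the face and degeneracy operators, together with confirming that the adjoint map in \autoref{ass6}(v) is the $\ga$ meant in the statement. The check is routine but essential, since it is the only place where the substantial axiom \autoref{ass6}(v) enters.
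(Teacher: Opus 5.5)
Your proposal is correct and follows the paper's proof. The paper likewise just assembles the pieces: \autoref{ass6}(iii) and the standing hypothesis on $\OM$ for preservation of weak equivalences, \autoref{extra} for $\ze$, the definition of $\sS_{c}$ for $\xi$, and \autoref{ass6}(v) applied to $\bE_*Y$ (using $\OM_*\bE_*Y = \overline{Y}_*$ and levelwise $\OM$-connectivity from \autoref{LAcon2}) for $\ga$. Your Step 1 additionally spells out, via \autoref{LAcon2} and \autoref{LAcon3}, that $\bE$ lands in $\sS_{c}$, which the paper leaves implicit.
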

\begin{proof}   $\bE$  preserves weak equivalences by \autoref{ass6}(iii) and $\OM$ is also assumed to preserve weak equivalences. The weak equivalence $\ze$ is given by \autoref{extra} and the weak equivalences $\xi$ and $\ga$ are given by \autoref{LAcon} and \autoref{ass6}(v).  That compares the identity to $\OM \bE$. Applying $\OM$, which is sufficient by \autoref{LAcon4}, the diagram in \autoref{LAcon5} compares $\bE\OM$ to the identity.   \end{proof}

 There is a glaring gap.  We would like to say that $\overline{Y}$ is a $\GA$-algebra such that $\ze$ and $\ga$ are maps of $\GA$-algebras.   Axioms for a more highly structured monad $\bC$ that acts on $\OM Z$ for $Z\in \sS$ ensure that this holds  with $\GA$ replaced by $\bC$.   That is the subject of \cite{KMZ1}.   As we explain next, something close to that is true here.
 
 \subsection{Linked $\GA$-algebras and the sharpened homotopical monadicity theorem}\label{OPLAX}
 We describe what structure $\overline{Y}$ has in our present context  and describe how it links the $\GA$-algebras $Y$ and $\OM \bE Y$.  This is a specialization of abstract theory given in \autoref{ablink}.
  
We have adjoint functors $(\SI_*,\OM_*)$, and their adjunction monad $\GA_*$ on
$s\sT$.  We also have the realization functor $\bT\colon s\sT\rtarr \sT$.
(The following definitions will not use the realization functor $\bT\colon s\sS\rtarr \sS$.) Continue to write $\ga$ for its specialization 
$$\ga\colon  \bT \GA_* =  \bT(\OM_* \SI_* (-)) \rtarr \OM \bT(\SI_*(-)) \iso \OM \SI\bT= \GA\bT.$$
The definition of an op-lax map of monads is recalled in \autoref{oplax}.

\begin{prop}\label{oplaxprop}  $(\bT,\ga)$ is an op-lax map of monads $\GA_*\rtarr \GA$.
\end{prop}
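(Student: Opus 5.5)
The plan is to recognize $\ga$ as the mate of a strong (iso) map of left adjoints and then to verify the two op-lax axioms by naturality and the triangle identities. An op-lax map of monads $(\bT,\ga)\colon \GA_*\rtarr\GA$ should mean a natural transformation $\ga\colon \bT\GA_*\rtarr \GA\bT$ satisfying two conditions:
\[
\ga\com \bT\et_* = \et\bT \qquad\text{and}\qquad \ga\com\bT\mu_* = \mu\bT\com \GA\ga\com \ga\GA_* .
\]
Here the second condition compares two maps $\bT\GA_*\GA_*\rtarr \GA\bT$.

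Write $\si\colon \SI\bT\iso \bT\SI_*$ for the isomorphism of \autoref{ass6}(iv). By definition, $\ga$ is the adjoint of the composite
\[
\SI\bT\OM_* \iso \bT\SI_*\OM_* \rtarr \bT,
\]
where the last map is $\bT\epz_*$. Equivalently,
\[
\ga = \OM(\bT\epz_*\com\si\OM_*)\com \et\bT\OM_*,
\]
and we precompose with $\SI_*$ and postcompose with $\OM\si^{-1}$.

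\textbf{Unit axiom.} We must show that $\ga\com\bT\et_*\colon \bT\rtarr\GA\bT$ equals $\et\bT$. By naturality of $\et$, the composite $\ga\com\bT\et_*$ is
\[
\OM\bigl(\bT\epz_*\SI_*\com \si\OM_*\SI_*\com \SI\bT\et_*\bigr)\com\et\bT,
\]
followed by $\OM\si^{-1}$. Naturality of $\si$ rewrites $\si\OM_*\SI_*\com\SI\bT\et_*$ as $\bT\SI_*\et_*\com\si$. The triangle identity $\epz_*\SI_*\com\SI_*\et_*=\id$ then collapses the composite to $\OM\si\com\et\bT$. Composing with $\OM\si^{-1}$ leaves $\et\bT$.

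\textbf{Multiplication axiom.} Here $\mu_*=\OM_*\epz_*\SI_*$ and $\mu=\OM\epz\SI$. The cleanest route is to pass to adjoints: both sides are maps $\bT\OM_*\SI_*\OM_*\SI_*\rtarr \OM\SI\bT$, so it suffices to compare their adjoints $\SI\bT\OM_*\SI_*\OM_*\SI_*\rtarr\SI\bT$.

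\begin{itemize}
\item For the left side, the adjoint of $\ga\com\bT\mu_*$ is $\bT\epz_*\com\si\com \SI\bT\OM_*\epz_*\SI_*$. By naturality of $\si$ and of $\epz_*$, this equals $\bT(\epz_*\com\epz_*\hat\GA_*)\SI_*$ composed with $\si$.
\item For the right side, the adjoint of $\mu\bT\com\GA\ga\com\ga\GA_*$ is $\epz\SI\bT\com\SI\ga\com(\text{adjoint of }\ga\GA_*)$. One applies the defining property $\epz\com\SI\ga = \bT\epz_*\com\si\OM_*$ of the adjoint twice, using naturality of $\si$ in between. This produces the same composite $\bT(\epz_*\com\SI_*\OM_*\epz_*)$.
\item Finally, naturality of $\epz_*$ gives $\epz_*\com\epz_*\hat\GA_* = \epz_*\com\hat\GA_*\epz_*$, so the two sides agree.
\end{itemize}

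The main obstacle is the bookkeeping in this second step, specifically carrying the isomorphism $\si$ correctly through the iterated composite. If $\si$ is only natural with no further compatibility, the argument still works, because only naturality of $\si$ is used. The underlying reason is that $\ga$ is the mate of the strong map of left adjoints $\si$, and mates of (co)lax comparisons of left adjoints automatically give op-lax monad maps of the adjunction monads. I would present the computation in that mate form, referring to the abstract framework of \autoref{ablink} for the general statement.
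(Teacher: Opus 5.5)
Your proof is correct, and its computational core is the paper's. The unit axiom comes from naturality of $\et$ and of $\sigma\colon \SI\bT\iso\bT\SI_*$ together with the triangle identity. The product axiom comes from passing to adjoints, using the characterization $\epz\bT\com\SI\ga = \bT\epz_*\com\sigma\OM_*$ of $\ga$ and naturality of $\epz_*$.

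The difference is in how the product axiom is organized. You verify it directly on free objects $\SI_*X_*$, carrying $\sigma^{-1}$ along explicitly. The paper instead first proves the product square of \autoref{oplaxprop2}, namely $\OM\epz\bT\com\GA\ga\com\ga\OM_* = \ga\com\bT\OM_*\epz_*$. This square is for $\ga\colon \bT\OM_*Z_*\rtarr\OM\bT Z_*$ with arbitrary $Z_*\in s\sS$, and the proof is the adjoint diagram \autoref{oplax3}, which is the same computation as yours. The paper then obtains the monad axiom by restricting to $Z_*=\SI_*X_*$, tacitly identifying $\OM\bT\SI_*$ with $\GA\bT$ via $\sigma$. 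The paper's route is no harder, and it simultaneously gives the linking of $\OM_*Z_*$ with $\OM\bT Z_*$ that is what is actually used in \autoref{link2} and \autoref{HomMon2}. Yours is self-contained and shows exactly where invertibility of $\sigma$ (and nothing more) is needed, which the paper leaves implicit.

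There are some small bookkeeping slips. Computed directly, the adjoint of $\ga\com\bT\mu_*$ is $\sigma^{-1}\com\bT(\epz_*\com\SI_*\OM_*\epz_*)\SI_*\com\sigma$. The adjoint of $\mu\bT\com\GA\ga\com\ga\GA_*$ is $\sigma^{-1}\com\bT(\epz_*\com\epz_*\SI_*\OM_*)\SI_*\com\sigma$. Your bullets attach these the other way round; this is harmless, since your final naturality step identifies them. On the second side, what is needed is naturality of $\epz$ (to move $\OM\sigma^{-1}$ past it), after which $\sigma_{\GA_*X_*}$ and $\sigma^{-1}_{\GA_*X_*}$ simply cancel. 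Also, the Appendix only defines op-lax maps and linkings. It contains no general result that mates of invertible comparisons of left adjoints give op-lax monad maps, so a mate-form presentation would have to supply that standard fact itself.
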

\begin{proof}
We must show that the following two diagrams commute.
\begin{equation}\label{oplax2a}
\xymatrix{
  & \bT \ar[dl]_{\bT\et_*} \ar[dr]^{\et\bT}  &  & & \bT \GA_*\GA_* \ar[r]^-{\ga \GA_*}  \ar[d]_{\bT \mu_*} &  \GA \bT\GA_* \ar[r]^-{\GA \ga} &  \GA\GA \bT  \ar[d]^{\mu\bT} \\
 \bT\GA_* \ar[rr]_-{\ga}  & & \GA \bT  & & \bT \GA_*  \ar[rr]_-{\ga} & & \GA \bT \\ }
\end{equation} 
Passing to adjoints and using the definition of $\ga\colon |\OM_* Y_*| \rtarr \OM |Y_*|$ in terms of $\epz_*$ together with a triangle identity, it is easy to check that the unit diagram commutes.  The product diagram is obtained from the
product diagram of the following result by restricting its variables to be objects $\SI_* X_*$ for $X_*\in s\sT$ rather than general objects $Y_* \in s\sS$.  
\end{proof}
Using  $\epz$, we have seen that the functor $\OM\colon \sS\rtarr \sT$ takes values in $\GA$-algebras.  Analogously, the functor $\OM_*\colon s\sS \rtarr s\sT$ takes values in $\GA_*$-algebras.  For simplicity of notation, we write $\OM$ and $\OM_*$ instead of the more logical $\OM_{\GA}$ and $\OM_{\GA_{*}}$ in the rest of this subsection.

\begin{prop}\label{oplaxprop2}  The op-lax map of monads $(\bT,\ga)\colon (\GA_*,\OM_*) \rtarr
  (\GA,\OM)$ links the functors $\OM_*$ and $\OM$ in the sense that the
  following diagrams commute.
 \begin{equation}\label{oplax2}
\xymatrix{
\bT\OM_* \ar[d]_{\bT\et_*\OM_*} \ar[dr]^{\et\bT\OM_*} \ar[rr]^-{\ga}  & &\OM \bT \ar[d]^{\et\OM\bT}   & & \bT \GA_*\OM_* \ar[r]^-{\ga \OM_*}  \ar[d]_{\bT \OM_*\epz_*} &  \GA \bT\OM_* \ar[r]^-{\GA \ga} &  \GA\OM \bT  \ar[d]^{\OM\epz\bT} \\ 
 \bT \GA_*\OM_* \ar[r]_{\ga\OM_*} & \GA\bT \OM_* \ar[r]_-{\GA\ga} & \GA\OM\bT  & & \bT \OM_*  \ar[rr]_-{\ga} & & \OM \bT  \\ }
\end{equation} 
\end{prop}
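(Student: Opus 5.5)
The plan is to verify both diagrams in \autoref{oplax2} by passing to adjoints under $\SI\dashv\OM$. Everything will then reduce to the defining property of $\ga$, naturality, and the triangle identities. Recall that $\ga\colon \bT\OM_*K_*\rtarr \OM\bT K_*$ is by definition the adjoint of the composite
$$\SI\bT\OM_*K_* \iso \bT\SI_*\OM_*K_* \xrightarrow{\bT\epz_*} \bT K_*.$$
So the characterizing identity is $\epz\com\SI\ga = \bT\epz_*\com\iota$, where $\iota$ denotes the isomorphism of \autoref{ass6}(iv). A map $\al\colon A\rtarr \OM B$ is determined by $\epz\com\SI\al$. I will therefore apply $\SI$ to both composites in each diagram, postcompose with $\epz\colon \SI\OM\bT\rtarr\bT$, and compare the results. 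I also need that $\iota$ is compatible with the adjunction, so I first record one lemma. For $X_*\in s\sT$, the composite $\bT X_*\rtarr \OM\SI\bT X_*\iso \OM\bT\SI_*X_*$ obtained from $\et$ and $\iota$ equals the composite $\bT X_*\rtarr \bT\OM_*\SI_*X_*\rtarr \OM\bT\SI_*X_*$ obtained from $\bT\et_*$ and $\ga$. This follows by taking adjoints and using the triangle identity $\epz_*\SI_*\com\SI_*\et_*=\id$ levelwise, together with naturality of $\iota$. It is exactly the unit diagram of \autoref{oplaxprop}.

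For the left diagram of \autoref{oplax2}, the upper-right path $\et\OM\bT\com\ga$ equals $\GA\ga\com\et\bT\OM_*$ by naturality of $\et$. So it suffices to show $\et\bT\OM_* = \ga\OM_*\com\bT\et_*\OM_*$ as maps $\bT\OM_*\rtarr\GA\bT\OM_*$. This is the unit lemma above applied to $X_*=\OM_*K_*$, once the target $\OM\bT\SI_*\OM_*K_*$ is identified with $\GA\bT\OM_*K_*$ via $\iota$. The triangle involving $\et\bT\OM_*$ then closes, and the whole left diagram commutes.

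For the right diagram, I take adjoints of both composites $\bT\GA_*\OM_*K_*\rtarr\OM\bT K_*$. The lower path $\ga\com\bT\OM_*\epz_*$ has adjoint $\bT\epz_*\com\iota\com\SI\bT\OM_*\epz_*$. By naturality of $\iota$ this becomes $\bT(\epz_*\com\SI_*\OM_*\epz_*)\com\iota$. For the upper path $\OM\epz\bT\com\GA\ga\com\ga\OM_*$, the adjoint of $\OM\epz\bT\com\GA\ga$ is $\epz\bT\com\SI\OM\epz\bT\com\SI\OM\SI\ga$. Using naturality of $\epz$ and then the defining identity of $\ga$ twice, once for $\ga$ itself and once for $\ga\OM_*$ (with $K_*$ replaced by $\SI_*\OM_*K_*$), this rewrites as $\bT\epz_*\com\bT\epz_*\SI_*\OM_*\com\iota$. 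The two composites then agree by the levelwise naturality identity
$$\epz_*\com\SI_*\OM_*\epz_* = \epz_*\com\epz_*\SI_*\OM_*.$$
The main obstacle is this last bookkeeping step. I must track how the isomorphism $\iota$ of \autoref{ass6}(iv) passes through $\SI\OM\SI$, that is, check that $\iota$ is natural and compatible with the identification $\GA\bT\iso\OM\bT\SI_*$ used in defining $\ga\OM_*$. I would handle this by writing every map as a map out of $\SI\bT(\cdot)$ and using only naturality of $\iota$ in its simplicial argument.
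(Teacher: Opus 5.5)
Your proposal is correct and is essentially the paper's proof. The unit diagram splits into the triangle given by the unit diagram of \autoref{oplaxprop} and a naturality square for $\et$. The product diagram is checked on adjoints using the adjoint definition of $\ga$, naturality of the isomorphism $\SI\bT\iso\bT\SI_*$, and the identity $\epz\com\SI\OM\epz=\epz\com\epz\SI\OM$; that identity is the middle square of the paper's adjoint diagram, and your explicit tracking of $\GA\bT\iso\OM\bT\SI_*$ inside $\ga\OM_*$ spells out what its two upper triangles encode.
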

\begin{proof}  The unit diagram is redundant since its triangle commutes by the unit diagram in \autoref{oplaxprop}, and its square commutes by the naturality of $\et$. We see that the product diagram commutes by passing to adjoints and expanding slightly, using the original adjoint specification of $\ga$.
\begin{equation}\label{oplax3}
\xymatrix{
\SI\bT \GA_*\OM_*  \ar[d]_{\SI\bT \OM_*\epz_*} \ar[r]^-{\iso}  & \bT\SI_*\OM_*\SI_*\OM_* \ar[r]^-{\bT\epz_*} \ar[d]_{\bT\SI_*\OM_* \epz_*}
& \bT\SI_*\OM_*   \ar[d]_{\bT\epz_*}&  \SI\bT\OM_* \ar[l]_-{\iso} \ar[dl]_{\tilde{\ga}} \ar[dr]^{\tilde{\ga}} \ar[r]^{\SI\ga} & \SI\OM \bT \ar[d]^{\epz\bT}\\
\SI \bT \OM_* \ar[r]_-{\iso} & \bT \SI_*\OM_* \ar[r]_-{\bT\epz_*}  & \bT  \ar[rr]_-{=} &   & \bT \\ }
\end{equation} 
The two left squares and the lower triangle commute trivially; $\tilde{\ga}$ denotes the adjoint of $\ga$, the upper left triangle commutes by our definition of $\ga$ and the upper right triangle commutes by the definition \autoref{adj3} of adjoint maps.
\end{proof}

A generalization of the following definition to algebras over any two monads connected by an oplax map is given in \autoref{CDlink}.

\begin{defn}\label{link}  Let $(Y_*,\tha_*)$ be a simplicial $\GA_*$-algebra and $(\bT Y_*, \tha)$ be a $\GA$-algebra.  We say that these objects are linked if the following diagrams commute. 
\begin{equation}
\label{linkeq}
\xymatrix{
\bT Y_{*} \ar[d]_{\bT \et_{*}} \ar[dr]^{\et\bT} &     & & \bT \GA_*Y_* \ar[r]^-{\ga}  \ar[d]_{\bT \tha_*} & \GA \bT Y_* \ar[d]^{\tha} \\
\bT \GA_*Y_* \ar[r]_{\ga} & \GA \bT Y_*              & & \bT Y_*  \ar[r]_-{=} & \bT Y_* \\ }
\end{equation}
To see the idea, combine these diagrams with the diagrams of \autoref{oplax2} and
the defining unit and product diagrams of the given algebras.   These diagrams show that $\ga$ maps 
$\bT$ applied to the simplicial $\GA_*$-algebra $(Y_*,\tha_*)$ to the $\GA$-algebra $(\bT Y_*, \tha)$. 
When we have the diagrams \autoref{linkeq}, we call $\ga$ a linking map.
\end{defn}

However, sensible though it is, our main examples are not quite of the form given by \autoref{link}.  Instead, take $Y_*$ in \autoref{link} to be $\OM_* Z_*$ for a simplicial object $Z_*\in s\sS$,
  so that $(\OM_* Z_*,\tha_*)$ is a simplicial $\GA_*$-algebra. It is not
  guaranteed that $(\bT \OM_* Z_*, \tha)$ is a $\GA$-algebra, but we have the map
  $\gamma: \bT \OM_{*}Z_{*} \rtarr \OM \bT Z_{*}$ to the $\GA$-algebra $ \OM \bT Z_{*}$. Here we don't have the right-hand  action map $\tha$ in \autoref{linkeq}, but we may compose 
  $\ga$ with $\GA\ga$ and  
apply the product diagram of \autoref{oplax2} to $Z_{*}$.  This leads to a variant of the notion of a linked pair of objects that applies in
  our context.
\begin{defn}\label{link2} We say that the simplicial $\GA_*$-algebra 
  $\OM_* Z_*$ and the $\GA$-algebra $\OM\bT Z_*$ are linked via the linking map $\ga\colon \bT\OM_*Z_* \rtarr \OM \bT Z_*$. The actions are
  given by
  $$\OM_*\epz_*\colon \GA_*\OM_* Z_* \rtarr \OM_*Z_* \text{ and } \OM \epz \bT\colon \GA \OM
  \bT Z_* \rtarr \OM \bT Z_*.$$
  We view \autoref{oplax2} as unit and product linking diagrams and we interpret
  \autoref{oplaxprop2} as saying that these objects are functorially linked for
  all $Z_*\in s\sS$. The examples of interest are of the form $B_*(\SI, \GA, Y)
  \equiv \bE_*Y$ for a $\GA$-algebra $Y$.  Here $\OM_* \bE_*Y$ is the $\GA_*$-algebra
  $B_*(\GA, \GA, Y) \equiv \overline{Y}_*$.
\end{defn}

We do have a trivial example that fits into \autoref{link} and relates to the linked pairs of \autoref{link2}.

\begin{exmp}\label{exmp1}  If $Y$ is a $\GA$-algebra, then $c_*Y$ is a $\GA_*$-algebra such that $\bT c_*Y = Y$ as $\GA$-algebras.  The map 
$$\ga\colon \bT \GA_* c_*Y \rtarr \GA \bT c_*Y = \GA Y$$
 is the identity since $\GA_* c_* =  c_*\GA $.  Thus  $(c_*Y,\tha_*)$ and $(Y,\tha)$ are linked trivially.  
\end{exmp}

Summarizing, we have the following structure on the maps $\ze\colon \overline{Y} \rtarr Y$ and $\ga\colon \overline{Y} \rtarr \OM\bE Y$.

\begin{thm}[Sharpened homotopical monadicity]\label{HomMon2}  Let $(Y,\tha)$ be a $\GA$-algebra and consider the maps 
$$ \xymatrix{
 \ar[d]_{=}Y  & \overline{Y}  \ar[l]_-{\ze} \ar[r] ^-{\ga} \ar[d]^{\equiv}  & \OM\bE Y \ar[d]^{\equiv} \\
\bT c_* Y    &  \bT \OM_*\bE_* Y  \ar[l]^{\bT\ze_*}  \ar[r]_{\ga} & \OM \bT \bE_* Y  \\}
$$
\begin{enumerate}[(i)]
\item Forgetting structures, $\ze$ is a homotopy equivalence  and $\ga$ is a weak equivalence.  
\item As in \autoref{exmp1}, $Y = \bT c_{*} Y$ and  $\ze = \bT \ze_*$,  where $\ze_*$ is a map of
  simplicial $\GA$-algebras.
\item As in \autoref{link2}, $\ga$ is a linking map relating $\overline{Y}$ to the  $\GA$-algebra $\OM \bE Y$.   
\end{enumerate}
\end{thm}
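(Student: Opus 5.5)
The theorem assembles facts already established, so the plan is to verify the two vertical identifications in the diagram and then check (i)--(iii) one at a time, citing earlier results.

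\emph{Identifications.} The left one, $Y = \bT c_*Y$, is the mini axiom identifying $\bT c_*$ with the identity. For the middle one, observe that $\OM_*\bE_*Y = \OM_* B_*(\SI,\GA,Y)$ has $q$-simplices $\OM\SI\GA^q Y = \GA^{q+1}Y$. Its faces are obtained by applying $\OM$ to those of $B_*(\SI,\GA,Y)$: the $0$th face $\OM\epz = \mu$, the inner faces $\OM\SI\mu$, the last face $\OM\SI\tha$, and degeneracies $\OM\SI\et$. These are exactly the faces and degeneracies of $B_*(\GA,\GA,Y)$. Hence $\OM_*\bE_*Y = \overline{Y}_*$ as simplicial objects, and after realization $\bT\OM_*\bE_*Y = \overline{Y}$. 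The right identification is simply the definition $\bE Y = \bT\bE_*Y$.

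\emph{Part (i).} That $\ze$ is a homotopy equivalence is \autoref{extra}. That $\ga$ is a weak equivalence follows from \autoref{ass6}(v), provided $\bE_*Y$ is levelwise $\OM$-connective. This holds because each $\bE_qY = \SI\GA^qY$ is of the form $\SI X$, hence $\OM$-connective by \autoref{LAcon2}. This is the only step with real homotopical content, and it is entirely absorbed by Assumption A(v).

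\emph{Part (ii).} Define $\ze_*\colon \overline{Y}_* \rtarr c_*Y$ levelwise as the iterated action $\GA^{q+1}Y \rtarr Y$, which is $\tha\com\mu^{q}$ or equivalently iterated $\tha$.
\begin{itemize}
\item Each level is a map of $\GA$-algebras, since $\mu$ and $\tha$ are, giving $\GA^{q+1}Y$ its free-ish action through the outer $\GA$.
\item Compatibility with faces and degeneracies follows from the associativity and unit axioms for $\tha$. Concretely, $\ze_*$ is the map produced by \autoref{triv} from $\tha\colon \GA Y\rtarr Y$, which satisfies $\tha d_0=\tha d_1$, i.e.\ $\tha\mu = \tha\GA\tha$.
\item By construction $\ze = \bT\ze_*$.
\item Since $\GA_*c_* = c_*\GA$, $\ze_*$ is a map of simplicial $\GA_*$-algebras into the trivially linked object of \autoref{exmp1}.
\end{itemize}

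\emph{Part (iii).} Apply \autoref{oplaxprop2} to $Z_* = \bE_*Y$. This gives the unit and product linking diagrams \autoref{oplax2}, with $\OM_*\bE_*Y = \overline{Y}_*$ a $\GA_*$-algebra via $\OM_*\epz_*$ and $\OM\bT\bE_*Y = \OM\bE Y$ a $\GA$-algebra via $\OM\epz\bT$. This is precisely the statement that $\ga$ is a linking map in the sense of \autoref{link2}.

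The main point requiring care is the middle identification, in particular that the $\GA_*$-algebra structure $\OM_*\epz_*$ on $\OM_*\bE_*Y$ matches the bar-construction structure on $\overline{Y}_*$. In each level this is $\OM\epz = \mu$ acting on the outer $\GA$, which is the intended structure.
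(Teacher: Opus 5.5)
Your proposal is correct and follows the paper's route. The paper gives no separate proof; it presents the theorem as a summary of \autoref{extra}, \autoref{ass6}(v) as used in \autoref{HomMon}, \autoref{exmp1} together with the construction of $\ze$ via \autoref{triv}, and \autoref{oplaxprop2} applied to $Z_* = \bE_*Y$ with the identification $\OM_*\bE_*Y = \overline{Y}_*$ noted in \autoref{link2}. Your explicit verification via \autoref{LAcon2} that $\bE_*Y$ is levelwise $\OM$-connective, which is what licenses the appeal to \autoref{ass6}(v), fills in a step the paper leaves implicit.
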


\begin{defn}\label{Onew} Consider the full subcategory of $\sT$ whose objects are the $\GA$-algebras.   Using the natural inverse equivalence $\nu\colon Y\rtarr \overline{Y}$ of $\ze$, the pair $(\ze,\ga)$ determines a {\em{linked weak equivalence}} $\ga\com \nu \colon Y \rtarr \OM \bE Y$. Inverting  these maps and their composites, rather than just inverting maps of $\GA$-algebras that are weak equivalences in $\sT$, we can construct a new  homotopy category of $\GA$-algebras.  
\end{defn}

Using \autoref{Onew}, Theorems \ref{HomMon} and  \ref{HomMon2} give the following conclusion.

\begin{cor}\label{Onew2} The homotopy category of $\GA$-algebras is equivalent to  the homotopy category of $\OM$-connective objects of $\sS$.
\end{cor}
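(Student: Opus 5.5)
The plan is to exhibit functors between localized categories and check that they are mutually inverse up to natural isomorphism. Let $\mathrm{Ho}\,\GA[\sT]$ be the localization of \autoref{Onew}. Its objects are $\GA$-algebras. Its morphisms are generated by maps of $\GA$-algebras and by formal inverses of two kinds of maps: weak equivalences of $\GA$-algebras, and the linked weak equivalences $\ga\com\nu\colon Y\rtarr \OM\bE Y$ together with their composites. Let $\mathrm{Ho}\,\sS_{c}$ be $\sS_{c}$ with its weak equivalences inverted. By \autoref{LAcon2} and \autoref{LAcon3}, $\bE Y = B(\SI,\GA,Y)$ is a realization of the levelwise $\OM$-connective object $\bE_*Y$ (levelwise of the form $\SI X$), so $\bE$ lands in $\sS_{c}$.

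First I construct the functors. By \autoref{HomMon}, $\OM_{\GA}\colon \sS_{c}\rtarr \GA[\sT]$ preserves weak equivalences, so it descends to $\mathrm{Ho}\,\sS_{c}\rtarr \mathrm{Ho}\,\GA[\sT]$. Also by \autoref{HomMon}, $\bE$ takes weak equivalences of $\GA$-algebras to weak equivalences. The extra step is to check that $\bE$ sends each linked weak equivalence to an isomorphism in $\mathrm{Ho}\,\sS_{c}$. For $\ga\com\nu\colon Y\rtarr \OM\bE Y$ I would argue as follows.
\begin{enumerate}[(a)]
\item Since $\nu$ and $\ga$ are only maps in $\sT$, I cannot apply $\bE$ to them directly. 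Instead I define $\bE$ of the formal morphism to be the isomorphism $\xi^{-1}\colon \bE Y\rtarr \bE\OM\bE Y=\widehat{\bE Y}$.
\item This is legitimate because $\xi$ is a weak equivalence, as $\bE Y$ is $\OM$-connective.
\item To make the assignment coherent, I use \autoref{LAcon4}: applying $\OM$ to $\xi$ and comparing through the triangle of \autoref{LAcon5} (with $Z=\bE Y$) recovers $\ga\com\nu$ up to the homotopy equivalence $\ze$.
\end{enumerate}
With $\bE$ so extended, composites of linked equivalences go to composites of inverted weak equivalences. Alternatively, and more cleanly, I would invoke the abstract framework of the Appendix (\autoref{ablink}), which presumably packages exactly this extension of $\bE$ to linked maps.

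Next come the natural isomorphisms, both supplied by \autoref{HomMon}. On $\sS_{c}$, the map $\xi\colon \bE\OM_{\GA}Z\rtarr Z$ is a natural weak equivalence, hence a natural isomorphism $\bE\OM_{\GA}\iso\Id$ in $\mathrm{Ho}\,\sS_{c}$. On $\GA[\sT]$, the linked weak equivalence $\ga\com\nu\colon Y\rtarr \OM\bE Y$ has been inverted by definition, so it is an isomorphism in $\mathrm{Ho}\,\GA[\sT]$. Its naturality with respect to maps of $\GA$-algebras follows from the naturality of $\nu$ (\autoref{extra}) and of $\ga$. Naturality with respect to the inverted morphisms is automatic once it holds on generators. \autoref{HomMon2} justifies that $\ga\com\nu$ is a legitimate morphism to invert: it is a weak equivalence, and $\ga$ is a linking map that preserves the monadic structure in the sense of \autoref{link2}. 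Hence $\OM_{\GA}\bE\iso \Id$.

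The main obstacle is the well-definedness of $\bE$ on the formally inverted linked maps. These are not maps of $\GA$-algebras, so functoriality of $\bE$ does not directly apply. The remedy is to define $\bE$ on them via $\xi^{-1}$. One must then verify that this respects the relations in the localization, in particular compatibility with ordinary algebra maps $f\colon Y\rtarr Y'$. That compatibility comes from naturality of $\ga$, $\nu$ and $\xi$, combined with \autoref{LAcon4}, which detects isomorphisms in $\mathrm{Ho}\,\sS_{c}$ after applying $\OM$.
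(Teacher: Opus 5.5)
Your proposal is correct and follows the paper's own one-line argument, which combines \autoref{HomMon} (the functors $\bE$, $\OM_{\GA}$ and the natural weak equivalences $\xi$, $\ze$, $\ga$) with \autoref{HomMon2} and \autoref{Onew} (inverting $\ga\com\nu$). Your extension $\bE(\ga\com\nu)=\xi^{-1}$ makes explicit a step the paper leaves implicit, and it is consistent because, by \autoref{LAcon5}, $\OM\xi\com\ga\com\nu=\ze\com\nu=\id$ for the algebra $\OM\bE Y$. Two minor corrections: compatibility with maps of $\GA$-algebras needs only the naturality of $\xi$, since \autoref{LAcon4} detects weak equivalences and not equalities of morphisms; and the Appendix provides no such extension of $\bE$, so drop that fallback.
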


\begin{rem}\label{moreC} Using more structured monads $\bC$ such that $\bC \bT$ is isomorphic to $\bT\bC_*$, we avoid the  use of  linked pairs in \cite{KMZ1}.  There the analogous resolution $\overline{Y}$ of a $\bC$-algebra is a $\bC$-algebra.   
\end{rem}

\section{Simplicial sets, simplicial spaces, and spaces}\label{SecSimp}
\subsection{The composite classical adjunction $(|-|,\mathbf{Sing})$}
This is a kind of musing on questions about simplicial spaces the senior author
thought a bit about when writing \cite{Mayss}.  Specialize $(\SI,\OM)$ in our general context to the classical
(realization, total singular complex) adjunction $(|-|,\mathbf{Sing})$ between simplicial sets $s{\mathbf{Set}}$  
and topological spaces $\sU$.\footnote{To reiterate, the notation $(\SI,\OM)$ is generic.  Especially in this section, one must resolutely ignore its standard usage.}
Here we are using the old notation of \autoref{Kancan2}. We shall compare that with the analogous adjunction $(\bT, \bS)$ of \autoref{Kancan}, where $\bT$ is the realization functor from simplicial spaces $s\sU$ to spaces $\sU$ and $\bS$ is defined exactly as $\mathbf{Sing}$, except that we take the $q$-simplex object $\bS_q(X)$ to be the space of continuous maps $\DE^t_q \rtarr X$ with the compact open topology.   

We also have the adjunction $(\bD,\bU)$ between $s{\mathbf{Set}}$ and $s\sU$; $\bD$ gives the set of $q$-simplices of a simplicial set its discrete topology and $\bU$ is the forgetful functor that takes the underlying set of the space of $q$-simplices.  By definition, the adjunction $(|-|, \mathbf{Sing})$ factors as $(\bT,\bS)\com (\bD,\bU)$.  That is, $|-| = \bT\bD$ and $\mathbf{Sing} = \bU\bS$.   Clearly $\et\colon \Id \rtarr \bU\bD$ is the identity but $\epz\colon \bD\bU\rtarr \Id$  is far from being an equivalence.  

Let $\GA_{s} = \mathbf{Sing}\com |-|$ and $\GA = \bS\bT$.  These are monads in $s{\mathbf{Set}}$ and in $s\sU$.  Of course, it is not to be expected that $\bU\colon s\sU \rtarr s{\mathbf{Set}}$ preserves weak equivalences.  The following diagram gives the formal categorical picture.  The larger triangle is a composite analog of the smaller one.   Remember that $|-| =\bT\bD$ and $\mathbf{Sing} = \bU\bS$.

 \begin{equation}\label{adj6} 
\xymatrix{
s{\mathbf{Set}} \ar@<.5ex>[rr]^{\bD}  \ar@<.5ex>[ddrr]^{\bF_{\GA_s}} & & s\sU  \ar@<.5ex>[ll]^{\bU}    \ar@<.5ex>[rr]^{\bT}  \ar@<.5ex>[dr]^{\bF_{\GA}}& & \sU   \ar@<.5ex>[ll]^{\bS}   \ar@<.5ex>[dl]^{\bS_{\GA}}   
\ar@/^4pc/@<.6ex> [ddll]^-{\mathbf{Sing}_{\GA_s}}        \\
& & &   \ar@<.5ex>[ul]^{\bU_{\GA}}  \GA[s\sU]   \ar@<.5ex>[dl]^{\bU}  \ar@<.5ex>[ur]^{\bT_{\GA}}& \\
& & \ar@<.5ex>[uull]^{\bU_{\GA_s}}  \GA_s[s{\mathbf{Set}}] \ar@{-->}@<.5ex>[ur]^{\bD} \ar@/_4pc/@<.6ex> [uurr]^-{|-|_{\GA_s}}& &  \\}
\end{equation}

The functor $\bU\colon \GA[s\sU] \rtarr \GA[s{\mathbf{Set}}] $ exists formally  by \cite[Proposition 5.11(i)]{KMZ1}  but also exists by inspection.  It is unclear to us whether or not the lower arrow $\bU$ has a left adjoint $\bD$, but we suspect not.  The curved adjunction exists formally by \cite[Proposition 1.3 and Lemma 5.3]{KMZ1}.

Diagrams like this, but satisfying conditions that are not satisfied here, are studied in \cite[Part II]{KMZ1}.   There the dotted arrow may or may not exist but the curved adjunction always exists.   It is standard that $\epz\colon |\mathbf{Sing} X |\rtarr X$ is a CW approximation of $X$ with $0$-cells the points of $X$.   In contrast, we have a sharper result for $\bT\bS$. We ignore weak equivalences for the moment,   just thinking of homotopy categories in terms of topological simplicial homotopy classes of
maps as defined in \autoref{Homotopy}.  As there, again let $c_*X$ denote the constant simplicial space at a space $X$.

\begin{prop}\label{new0}
For spaces $X$, $\bS X$ is naturally homotopy equivalent to $c_*X$.
\end{prop}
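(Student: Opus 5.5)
We prove that $\bS X$ and $c_*X$ are homotopy equivalent using the \emph{topological} simplicial homotopies of \autoref{Homotopy}, that is, maps $K_*\times I\rtarr L_*$ in $s\sU$. The idea is that each topological simplex $\DE^t_q$ is contractible. This lets us build two simplicial maps, composing to the identity one way and homotopic to the identity the other way.

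First define $\io\colon c_*X\rtarr \bS X$ as the map that sends a point $x\in X$ to the constant map $\DE^t_q\rtarr X$ at $x$ in each degree $q$. This is a map of simplicial spaces because precomposing a constant map with any simplicial operator $\DE^t_m\rtarr\DE^t_n$ gives the same constant map. It is also continuous in the compact open topology.

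In the other direction, define $\pi\colon \bS X\rtarr c_*X$ by evaluation at a vertex. The evaluation must be compatible with all faces and degeneracies, and the faces and degeneracies of $c_*X$ are identities. So the chosen point must be preserved by every simplicial operator, and no single vertex works in every degree. The fix is to evaluate at the \emph{last} vertex $v_q$ of $\DE^t_q$. A face map $\de_i\colon \DE^t_{q-1}\rtarr\DE^t_q$ carries the last vertex $v_{q-1}$ to $v_q$ only when $i\ne q$, so the last vertex fails for $d_q$. Since a single vertex cannot work, I would instead use evaluation at the barycenter $b_q$, and track it through $\pi$ compatibly using \autoref{triv}. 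By \autoref{triv}, a map $f=\mathrm{ev}\colon \bS_0X = X\rtarr X$ with $fd_0=fd_1$ gives a map $\xi_*\colon \bS X\rtarr c_*X$. The hypothesis holds only after we know the map factors through $\pi_0$-like data, so this step needs care. The cleanest choice is to define $\pi_q(\si)=\si(b_q)$ and check that it is a simplicial map up to homotopy. Strictly, though, it is not simplicial, since faces do not send barycenters to barycenters.

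I therefore expect the main obstacle to be producing a strict simplicial map $\bS X\rtarr c_*X$. My first attempt: $\pi\io=\id$ requires only some natural point in each $\DE^t_q$, but simpliciality requires compatibility with faces, and such a point cannot exist. So I would reverse the strategy and show that $\io$ has a homotopy inverse in the weaker sense of \autoref{Homotopy}. Concretely, I would construct a topological simplicial homotopy $h\colon \bS X\times I\rtarr \bS X$ from the identity to a map that factors through $c_*X$. For $\si\in\bS_qX$, the straight-line contraction $h(\si,t)(u)=\si((1-t)u+tb_q)$ again fails simpliciality. Instead I would use the vertex-wise contraction $h(\si,t)(u)=\si(\rho_t(u))$, where $\rho_t$ is a natural deformation of $\DE^t_q$ onto the vertex $v_0$ that commutes with all operators preserving $v_0$. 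If that fails, I would fall back on realizing: the needed equivalence is $\bT\bS X\simeq X$ via $\epz$. There the extra degeneracy of \autoref{extra}, applied to the path-space description $\bS_qX\simeq X$ levelwise, gives a levelwise homotopy equivalence $\mathrm{ev}_{v_0}\colon\bS_qX\rtarr X$. This would then be assembled into a simplicial-homotopy statement.
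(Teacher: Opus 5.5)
\textbf{There is a genuine gap in your proposal, but the obstruction behind it is real, and it also breaks the paper's own proof.}

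Your proposal does not prove the statement, because you never produce a simplicial map $\bS X\rtarr c_*X$, and each candidate you try fails. You are right to reject the last vertex and the barycenter. The contraction of $\DE^t_q$ onto $v_0$ fails for $d_0$, for the same reason the last vertex fails for $d_q$. Your fallback only concerns $\bT\bS X$, and it cannot be ``assembled into a simplicial-homotopy statement.''

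The paper's proof takes $\pi(\ph)=\ph(0^q,1)$ and $h(\ph,t)(t_0,\dots,t_q)=\ph(tt_0,\dots,tt_{q-1},tt_q+1-t)$, and asserts that both are simplicial. Neither commutes with $d_q$:
\begin{enumerate}[(i)]
\item Since $\io_0=\id$, the identity $\pi\com\io=\id$ forces $\pi_0=\id$. Simpliciality of $\pi$ in degree $1$ would then require $\ph(1,0)=d_1\ph=d_0\ph=\ph(0,1)$ for every path $\ph$.
\item For $q=1$ one gets $d_1h(\ph,t)=\ph(t,1-t)$. But $h(d_1\ph,t)=\ph(1,0)$, because $h$ is the constant homotopy in degree $0$.
\end{enumerate}
The formula for $h$ does commute with all $s_i$ and with $d_i$ for $i<q$; only $d_q$ fails.

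\textbf{No repair is possible, and your worry about ``$\pi_0$-like data'' is exactly why.} By \autoref{triv}, a simplicial map $f\colon \bS X\rtarr c_*X$ is determined by a map $f_0\colon X\rtarr X$ with $f_0d_0=f_0d_1$ on $\bS_1X$. That is, $f_0(\ph(1,0))=f_0(\ph(0,1))$ for every path $\ph$, so $f_0$ is constant on path components.

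Now take $X=S^1$. For every $g\colon c_*X\rtarr \bS X$, the composite $f\com g$ is $c_*$ of a constant map. A topological simplicial homotopy $c_*X\times I=c_*(X\times I)\rtarr c_*X$ is just a homotopy $X\times I\rtarr X$. So $f\com g$ is not homotopic to $\id$. Combinatorial simplicial homotopies do not help either, since all faces of $c_*X$ are identities. Hence the proposition is false for $S^1$ when read as a homotopy equivalence of simplicial spaces. That is how the paper's proof reads it, and how it is used via $\bT\pi$ in \autoref{new?}.

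\textbf{What survives.}
\begin{enumerate}[(i)]
\item $\io$ is a natural simplicial map and a levelwise homotopy equivalence. The evaluation $\mathrm{ev}_{v_q}$ is a levelwise inverse, with the paper's $h$ serving as a levelwise homotopy.
\item $\epz\com\bT\io=\id_X$.
\end{enumerate}
So $\epz\colon\bT\bS X\rtarr X$ is an equivalence as soon as realization carries the levelwise equivalence $\io$ to an equivalence. This holds, for instance, under the Reedy cofibrancy hypothesis of \autoref{ass6}(iii), which is now needed for $\bS X$. This is the correct form of your fallback. It must be routed through $\io$, since no evaluation map can be made simplicial, and it yields a statement only after realization, not a simplicial homotopy equivalence.
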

\begin{proof} Define $\pi\colon \bS X \rtarr c_*X$ and $\io\colon c_*X\rtarr \bS  X$ on and to $q$-simplices as follows.   For $\ph\colon \DE^t_q \rtarr X$ let $\pi(\phi) = \ph(0^q, 1)\in X$, where $(0^q,1)$ is the last vertex of $\DE^t_q$.   Since $\DE^t_q$ is contractible, its constant map to $(0^{q},1)$ is a homotopy equivalence, and these maps for varying $q$ induce the simplicial map $\pi$.  For $x\in X$, define the $q$-simplex $\io_q(x)$ to be $s_{q-1}\cdots s_0\io_0(x)$, where $\io_0(x)(1) = x$. Observe inductively that $\io_q(x)\colon \DE^t_q \rtarr X$ is constant at $x$.   Clearly $\pi\com \io = \id$.  Recall that the faces and degeneracies on $\bS_qX$ are given by
$$(d_i(\ph))(t_0, \cdots, t_{q-1}) = \ph(t_0, \cdots t_{i-1},0,t_i, \cdots, t_{q-1})$$
and
$$(s_i(\ph))(t_0,\cdots, t_{q+1}) = \ph (t_0, \cdots t_{i-1},t_i+ t_{i+1}, t_{i+2}, \cdots, t_{q+1}).$$
Define a (topological simplicial) homotopy $h\colon \bS X\times I \rtarr S X$ on $q$-simplices $\ph$ by
$$h(\ph,t)(t_0, \cdots, t_q) = \ph(t t_0, \cdots , t t_{q-1}, t t_q +1-t)$$
By quick verifications, $h(\ph,0) = (\io\com \pi)(\ph)$,  $h(\ph, 1) = \ph$, and
$h$ commutes with $d_i$ and $s_i$.  Thus $h\colon \io\com \pi \htp \id$ and in
fact $h$ shows that  $\pi$ is a deformation retraction of $\bS X$ onto $c_*X$.  
\end{proof}

\begin{thm}\label{new?} The adjunction $(\bT,\bS)$ induces an adjoint equivalence between the homotopy category of simplicial spaces and the homotopy category of spaces.
\end{thm}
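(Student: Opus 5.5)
We show that the unit and counit of $(\bT,\bS)$ become isomorphisms in the homotopy categories. Here the homotopy category of $s\sU$ is formed by inverting levelwise weak equivalences between Reedy cofibrant objects, or alternatively from topological simplicial homotopy classes as in \autoref{Homotopy}. The homotopy category of $\sU$ is formed by inverting weak equivalences, or homotopy equivalences in the homotopy-class version. First we check that both functors descend. The functor $\bT$ preserves weak equivalences between Reedy cofibrant objects by \autoref{ass6}(iii). It also preserves topological simplicial homotopies, since $\bT(K_*\times c_*I)\iso \bT K_*\times I$ by \autoref{Homotopy}. The functor $\bS$ preserves weak equivalences levelwise, because $\bS_q X = \sU(\DE^t_q,X)$ and $\DE^t_q$ is contractible. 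Indeed, \autoref{new0} gives $\bS X\simeq c_*X$ naturally, so $\bS f$ is levelwise a weak equivalence whenever $f$ is. A homotopy $X\times I\rtarr Y$ induces $\bS X\times c_*I\rtarr \bS Y$ by adjointing $\DE^t_q\times I$ maps, so $\bS$ also preserves homotopies.

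For the counit $\epz\colon \bT\bS X\rtarr X$, we use \autoref{new0}. The homotopy equivalence $\pi\colon \bS X\rtarr c_*X$ realizes to a homotopy equivalence $\bT\pi\colon \bT\bS X\rtarr \bT c_*X = X$, because $\bT$ preserves topological simplicial homotopies. We then check that $\bT\pi$ agrees with $\epz$, or at least is homotopic to it. To do this, compare adjoints: $\epz$ is adjoint to the identity of $\bS X$, while $\bT\pi$ is adjoint to the composite $\bS X\rtarr c_*X\rtarr \bS X$ whose second map is $\io$ (the unit of $(\bT,\bS)$ evaluated on $c_*X$ identifies with $\io$). That composite is $\io\com\pi$, which is homotopic to the identity via $h$. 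Since homotopic maps have homotopic adjoints, $\epz\simeq\bT\pi$, and so $\epz$ is a homotopy equivalence.

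For the unit $\et\colon K_*\rtarr \bS\bT K_*$, we compose with $\pi\colon \bS\bT K_*\rtarr c_*\bT K_*$. Levelwise, $\pi\com\et$ is the map $K_q\rtarr \bT K_*$ given by inclusion of $K_q\times\{\text{last vertex}\}$. This is not a levelwise equivalence, so the homotopy category of $s\sU$ must be handled with care, and this is the main obstacle. We will interpret the statement via realization: call a map $f$ of simplicial spaces an equivalence if $\bT f$ is one. This is coarser than levelwise equivalence, but it agrees with levelwise equivalence on Reedy cofibrant objects up to the usual localization. With this convention, $\bT\et$ is a one-sided inverse to $\epz\bT$ by a triangle identity. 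Since $\epz\bT$ is an equivalence by the previous paragraph, $\bT\et$ is one as well, and $\et$ becomes an isomorphism. To make this consistent with the paper's setup, we would verify that every simplicial space is equivalent to $c_*\bT K_*$ in this localized sense. This also exhibits the homotopy category of $s\sU$ as equivalent to that of $\sU$ via $c_*$ and $\bT$.

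Finally, the triangle identities descend from the point-set adjunction. Hence $(\bT,\bS)$ induces an adjoint equivalence of homotopy categories, with inverse equivalence given up to natural isomorphism by $c_*$.
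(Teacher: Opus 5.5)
Your proposal is correct and follows essentially the same route as the paper. The counit is identified up to homotopy with $\bT\pi$ via $\epz\com\bT\io=\id$ together with $h\colon \io\com\pi\htp\id$; your observation that the unit at $c_*X$ is $\io$ is just the adjoint form of that identity. The unit is then handled by a triangle identity. You also make explicit that $\et$ is an equivalence only after realization, so simplicial spaces must be localized at maps $f$ with $\bT f$ a weak equivalence (the convention adopted just before \autoref{ouch}). Under the levelwise or homotopy-class notions you mention first, the unit is not invertible, so this point usefully sharpens the paper's terse final sentence.
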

\begin{proof}
Using \autoref{new0} and tracing through the definitions in its proof, we see that %Since $\epz|\ph,u| = \phi(u)$,  $\epz|\io(x), u| = \io(x) = u$.  under the identification of $\bT c_*X$ with $X$, 
$\epz\com \bT \io$ is the identity on $X$.  Composing with $\bT\pi$, it follows that $\epz$ is homotopic to $\bT\pi$ and is therefore a homotopy equivalence.  A triangle identity then gives that $\et$ is also a homotopy equivalence.
\end{proof}

\begin{rem} We should say something about model structures, but the literature is focused on simplicial model structures.  For example, Dugger \cite{Dug} gives a simplicial model structure on $s\sU$ such that the constant simplicial space functor $c_*\colon \sU \rtarr s\sU$ and the zeroth space functor $z_0\colon s\sU \rtarr \sU$ specify a Quillen equivalence.  His model structure on $\sU$ is Quillen's, and his model structure on $s\sU$ is a suitable localization of the Reedy model structure. His focus is on the fact that this is a simplicial model structure.  We have the following composite adjunction, which should be compared with that of \autoref{adj6}.
\begin{equation}\label{adj7} 
\xymatrix{
\sU \ar@<.5ex>[rr]^{c_*} & & s\sU  \ar@<.5ex>[ll]^{z_0}    \ar@<.5ex>[rr]^{\bT}  & & \sU   \ar@<.5ex>[ll]^{\bS}    \\}
\end{equation}
Both the composite left adjoint and the composite right adjoint are naturally isomorphic to the identity functor of $\sU$.  Dugger's focus is on the left adjunction and the simplicial enrichment of $s\sU$.  Our focus is on the right adjunction and the topological enrichment of $s\sU$. These enrichments are quite different, as the tensors with simplicial sets and with spaces in the first and second definitions of homotopies in \autoref{Homotopy} illustrate.  It is the topological focus that makes the proofs of \autoref{new0} and \autoref{new?} so trivial.  As Emily Riehl suggested to us, it seems likely that $(\bT,\bS)$ is a Quillen adjunction, where we use Dugger's model structure on $s\sU$.  We  believe that this should also be a Quillen equivalence with respect to a more topologically grounded model structure on $s\sU$, perhaps constructed along the lines of having weak equivalences and cofibrations created by the left adjoint $\bT$.  However, we have not pursued these ideas.
\end{rem}

\subsection{\autoref{ass6} in the contexts of simplicial sets and simplicial spaces}
The previous subsection had nothing to do with homotopical monadicity.
 We return to that now.  We take our fixed adjunction $(\SI, \OM)$ (as in \autoref{SecHom})  to be either
 $$|-|\colon s\mathbf{Set} \rtarr \sU \ \ \text{and} \ \ \mathbf{Sing}\colon \sU \rtarr s\mathbf{Set} $$
 or
 $$\bT\colon s\sU\rtarr \sU  \ \ \text{and} \ \ \bS\colon \sU \rtarr s\sU.$$
Recall that $\mathbf{Sing}$ is the total singular complex functor and that $\bS$ gives its set of $q$-simplices  the standard function space topology  for all $q$. 

To see \autoref{ass6}, we need left adjoint realization functors 
$$ \bT_{ver}^s \colon ss{\mathbf{Set}} \rtarr s{\mathbf{Set}}, \ \ \text{and}  \ \  \bT_{ver}^t\colon  ss\sU\rtarr s\sU.$$ 
 
Like $\bT$, these are given by \autoref{Kancan}, as formalized concretely in \autoref{simpAi} below. 
Note that $\bT$ in \autoref{ass6} now refers either to  $\bT_{ver}^s$ and $\bT$ or to $\bT_{ver}^t$ and $\bT$.   It may help to emphasize that we are thinking of $ss\sV$ as simplicial objects (horizontal) in $s\sV$ (vertical), so that when writing $X_{*,*}$ for any kind of object $X$, the first $*$ refers to the horizontal and the second to the vertical variable.

Recall that $s{\mathbf{Set}}$ is a closed cartesian monoidal category with product and internal hom given by \cite[Definitions 6.1 and 6.4]{Mayss}, so that it is tensored and cotensored over itself.  To apply \autoref{Kancan}, we use the covariant functor $\DE^s\colon \DE \rtarr s{\mathbf{Set}}$ such that $\DE^s_q$ is the simplicial set represented by the object $\bq\in \DE$.  

We regard $s\sU$ as tensored and cotensored over $\sU$ via levelwise cartesian product and levelwise hom  $\sU^{op}\times s\sU \rtarr s\sU$ and we apply \autoref{Kancan} using  the covariant functor $\DE^t\colon \DE \rtarr s\sU$ such that $\DE^t_q$ is the standard topological $q$-simplex.

\begin{defn}\label{simpAi}  Applying \autoref{Kancan}  with $\sV = \sU = s{\mathbf{Set}}$,  and  $\DE_*^u = \DE^s$ or with $\sV = s\sU$,  $\sU = \sU$, and  $\DE_*^u = \DE^t$, we obtain the required  left adjoint realization functors $\bT_{ver}^s$ and 
$\bT_{ver}^t$.  Thus \autoref{Kancan} directly gives \autoref{ass6}(i).  
\end{defn} 

Digressing slightly, we place this definition in a general context before returning to \autoref{ass6}.
Bisimplicial sets were treated in \cite[Appendix B]{BF} and, more recently, \cite[Chapter IV]{GoJa} and elsewhere.
In contrast, we have not found a treatment of bisimplicial spaces in the literature. In fact, bisimplicial spaces are very often defined to mean bisimplicial sets.   The following general bisimplicial analog of \autoref{Kancan} defines $\bT_{ver}^v$ in any reasonable category $\sV$.  

\begin{con}\label{Kancan3}  With notations as in \autoref{Kancan} and with $\sU$ assumed to be cartesian monoidal,  realization functors $\mathbf{Tot}  \colon ssV\rtarr V$ can generally be defined by categorical tensor products
$$ \mathbf{Tot}  K_{**} =  K_{**}\otimes_{\DE\times \DE} (\DE^{u}_*\times \DE^{u}_*).$$ 
As in \autoref{Kancan}, $\mathbf{Tot}$ is then a left adjoint with the $(p,q)$ simplices of the right adjoint given by 
$\Hom(\DE_p^u\times\DE_q^u, X)$. Here, as in \autoref{Kancan}, $\Hom$ is the cotensor functor  $\sU^{op}\times \sV \rtarr \sV$.  
The categorical tensor product here is defined explicitly via the coequalizer diagram
\begin{equation}\label{bibi}
 \xymatrix@1{\coprod_{(\ph,\ps)\in \DE(m,p)\times \DE(n,q)}  K_{p,q} \otimes (\DE_m^{u}\times \DE_n^u)  \ar@<.7ex>[r]^-{(\phi,\ps)\otimes \id}  \ar@<-.7ex>[r]_-{\id\otimes (\ph,\ps)}  & \coprod_{p,q} K_{p,q}\otimes \DE_p^{u}\times\DE_q^u \ar[r] &  \mathbf{Tot}K_{**}.\\}
 \end{equation} 
For each fixed $p$, we have an object $K_{p,\ast}\in s\sV$ that we  can realize to obtain a levelwise vertical realization 
$\bT_{ver}\colon ss\sV\rtarr s\sV$.  Analogously, fixing $q$ and realizing, we can obtain a levelwise horizontal realization 
$\bT_{hor}\colon ss\sV\rtarr s\sV$.  We add a superscript $v$ when $\sV$ is ambiguous. Breaking the domain coproduct in \autoref{bibi} as 
$\coprod_{\ph\colon m\rtarr p}\coprod_{\psi\colon n\rtarr q}$ and omitting some evident requisite categorical hypotheses, we find that \autoref{bibi} is isomorphic to the composite diagram
{\small{
\begin{equation}\label{bibi2}
\xymatrix{
\coprod_{\ph\in\DE(m,p)} (\coprod_{\ps\in \DE(n,q)} K_{p,q}\otimes \DE_{n}^u)\otimes \DE_{m}^u
\ar@<.7ex>[r]^-{(\ps\otimes \id)\otimes \id}  \ar@<-.7ex>[r]_-{(\id \otimes \ps)\otimes\id}  & 
\coprod_{\ph\in\DE(m,p)} (\coprod_q K_{p,q}\otimes \DE_q^{u}) \otimes \DE_m^u  \ar[r] &\\
 \coprod_{\ph\in \DE(m,p)} (\bT_{ver}K_{*,*})_p \otimes\DE_m^u  
 \ar@<.7ex>[r]^-{\ph\otimes \id}  \ar@<-.7ex>[r]_-{\id\otimes \ph}  & \coprod_{p}(\bT_{ver}K_{*,*})_p \otimes \DE_p^u \ar[r] & \bT\bT_{ver} K_{\ast,\ast} \\}
\end{equation}
}}
We have a similar isomorphism with the roles of vertical and horizontal reversed.  This gives Fubini theorem type isomorphisms
\begin{equation}\label{Fubini}
\bT \bT_{ver} K_{**} \iso \mathbf{Tot} K_{**} \iso \bT \bT_{hor} K_{**}.
\end{equation}
\end{con}

\begin{rem} Although we will not use it, \cite[Proposition B.1]{BF} says that $\bT_{ver}^s$ is isomorphic to the functor that sends a bisimplicial set $K_{*,*}$ to the diagonal simplicial set $\mathbf{Diag}K_{*,*}$ with $q$-simplices $K_{q,q}$. It is folklore that this is true up to homotopy for simplicial spaces. We understand how to try to prove a more general analogue, but we have not pursued the details.
\end{rem}

The following diagrams give a picture of our framework.

\begin{equation*}
  \begin{tikzcd}
    ss\mathbf{Set} \ar[d,"|-|_2"'] \ar[r,shift left, "|-|_{*}"]
    & s\sU \ar[d,"\bT"] \ar[l, shift left, "\mathbf{Sing}_{*}"]
    &     ss\sU \ar[d,"\bT_2"'] \ar[r,shift left, "\bT_{*}"]
    & s\sU \ar[d,"\bT"] \ar[l, shift left, "\bS_{*}"]\\
    s\mathbf{Set} \ar[r,shift left, "|-|"] & \sU \ar[l, shift left, "\mathbf{Sing}"]
    &     s\sU \ar[r,shift left, "\bT"] & \sU \ar[l, shift left, "\bS"]
  \end{tikzcd}
\end{equation*}

As in \autoref{Homotopy}, \autoref{ass6}(ii) about preservation of homotopies  holds formally, where classical simplicial homotopies are used in $ss{\mathbf{Set}}$ and $s{\mathbf{Set}}$ but topological simplicial homotopies are used in $ss\sU$ and $s\sU$.

For \autoref{ass6}(iii), we need to be precise about weak equivalences.  We have choices, and we adopt the following definition.  Recall that one correct definition of a weak equivalence of simplicial sets is a map $f$ such that $|f|$ is a weak equivalence of spaces.  Analogously, we define a weak equivalence of simplicial spaces to be a map $f$ such that $\bT f$ is a weak equivalence of spaces.  A plausible alternative might be to require $f$ to be a levelwise weak equivalence. However, for Reedy cofibrant simplicial spaces, realizations of levelwise weak equivalences are weak equivalences, although that is not well-documented in the literature; see \cite{SP} for discussion. 

\begin{defn}\label{ouch} A map  $f \in ss{\mathbf{Set}}$ is a weak equivalence if $|-|$ applied levelwise gives a levelwise weak equivalence in $s\sU$.  Similarly, assuming levelwise Reedy cofibrancy, a map  $f \in ss\sU$ is a weak equivalence if $\bT$ applied levelwise gives a levelwise weak equivalence in $s\sU$.
\end{defn}

For the functors we start with, $|-| \colon s{\mathbf{Set}} \rtarr \sU$ preserves weak equivalences by definition and $\mathbf{Sing} \colon \sU \rtarr s{\mathbf{Set}}$ preserves weak equivalences by the two out of three property since $\epz\colon |\mathbf{Sing}X| \rtarr X$ is a natural weak equivalence.  For  the analogs for $\bS$ and $\bT$ we again have to take Reedy cofibrancy into account, but \autoref{new0} shows that there is no real problem. 

We now consider \autoref{ass6}(iii)-(v), starting from the domain category $s{\mathbf{Set}}$ or $s\sU$ and the  target category  $\sU$ of our adjunctions $(|-|,\mathbf{Sing})$ and $(\bT,\bS)$. 

\begin{lem}[\autoref{ass6}(iii)]  If  $f_{*,*}\colon K_{*,*}\rtarr L_{*,*}$ is a weak equivalence between  objects of $ss{\mathbf{Set}}$ or of $ss\sU$, levelwise Reedy cofibrant both vertically and horizontally  in the latter case, then its realization $f_* = \bT_{ver}^s f_{*,*}$  or $f_* = \bT_{ver}^t f_{*,*}$ is a weak equivalence in $s{\mathbf{Set}}$ or in $s\sU$.
\end{lem}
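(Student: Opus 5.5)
We must show that if $f_{*,*}$ is a weak equivalence in the sense of \autoref{ouch}, then $\bT_{ver} f_{*,*}$ is a weak equivalence in $s\mathbf{Set}$ or $s\sU$. By the definitions, that means $|\bT^s_{ver} f_{*,*}|$ or $\bT\bT^t_{ver} f_{*,*}$ must be a weak equivalence of spaces. The plan is to reduce to the space level through the Fubini isomorphisms \autoref{Fubini}. We realize first in the \emph{other} direction, where the hypothesis of \autoref{ouch} gives levelwise equivalences, and then apply the realization result for simplicial spaces.

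\emph{Case $ss\sU$.} By \autoref{Fubini},
$$\bT\bT^t_{ver} f_{*,*} \iso \mathbf{Tot} f_{*,*} \iso \bT\bT^t_{hor} f_{*,*}.$$
The hypothesis says that for each $q$ the map $\bT f_{*,q}$ of row realizations is a weak equivalence of spaces. In other words, $\bT^t_{hor} f_{*,*}$ is a levelwise weak equivalence of simplicial spaces. (I first match the paper's indexing convention in \autoref{ouch} to the direction that is realized there; if \autoref{ouch} realizes the vertical direction, the roles of the two orders are simply swapped.)

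It remains to see that $\bT$ takes this levelwise equivalence to a weak equivalence. That is the Reedy-cofibrant statement cited in \autoref{assrems}(iii) for simplicial spaces \cite{MayPerm, GoJa}. To use it, I need $\bT^t_{hor}K_{*,*}$ and $\bT^t_{hor}L_{*,*}$ to be Reedy cofibrant. I would get this from the levelwise Reedy cofibrancy of $K$ and $L$ in both directions. The latching map of $\bT_{hor}K$ in degree $q$ is the horizontal realization of the vertical latching maps. Realization is a left adjoint that preserves cofibrations, since it is built from pushout products with the boundary inclusions $\partial\DE^t_n\rtarr \DE^t_n$. Hence the latching maps remain cofibrations. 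Once this holds, two out of three across the isomorphisms finishes the case.

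\emph{Case $ss\mathbf{Set}$.} Here I would argue by the same Fubini argument. Applying $|-|$ levelwise turns the bisimplicial set into a bisimplicial space $|K|_{*,*}$ whose levels are CW complexes. There is a natural isomorphism $|\bT^s_{ver}K_{*,*}| \iso \mathbf{Tot}\,|K|_{*,*}$. To prove it, I would check that the coequalizer \autoref{bibi} for $\DE^s$, after geometric realization, becomes the one for $\DE^t$; this uses that $|-|$ commutes with colimits and with finite products, so $|\DE^s_m\times\DE^s_n|\iso\DE^t_m\times\DE^t_n$. Then \autoref{Fubini} lets us realize in the direction where the hypothesis gives levelwise weak equivalences $|f_{p,*}|$. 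Simplicial spaces with CW levels arising from simplicial sets are Reedy cofibrant, because degeneracies are inclusions of subcomplexes. So the same realization theorem applies. Alternatively, one can use the diagonal description \cite[Proposition B.1]{BF} together with the standard fact that the diagonal preserves levelwise weak equivalences.

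The main obstacle is the Reedy cofibrancy bookkeeping in the topological case: showing that horizontal realization preserves Reedy cofibrancy in the vertical direction. I would first try the pushout-product argument sketched above. If that proves messy, I would fall back on the remark after \autoref{ouch} and on \autoref{new0}, which suggest that all the objects of interest here, namely bar constructions on $\bS$ and $\bT$, are homotopy equivalent to well-behaved ones.
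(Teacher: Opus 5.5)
\paragraph{Verdict.} Your proof is correct in outline. In the topological case it is the paper's argument; in the simplicial set case it takes a different route. One sub-step in the topological case is mis-justified, though the paper does not address that point either.

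\paragraph{Comparison with the paper.} The paper reads \autoref{ouch} as saying directly that $f_*=\bT_{ver}f_{*,*}$ is a levelwise weak equivalence. In the topological case it then just realizes $f_*$. That is your argument once you match directions, so your detour through \autoref{Fubini} only makes the proof indifferent to which direction the definition realizes. For $ss\mathbf{Set}$ the paper simply cites \cite[Theorem B.2]{BF}, the diagonal theorem for bisimplicial sets. Applying it to $\bT^s_{ver}$ tacitly uses \cite[Proposition B.1]{BF}, which identifies $\bT^s_{ver}$ with the diagonal, even though the paper says it will not use that result. You instead combine two ingredients:
\begin{enumerate}
\item[(a)] the isomorphism $|\bT^s_{ver}K_{*,*}|\iso \bT|K_{*,*}|_*$ of \autoref{Tcommute};
\item[(b)] \autoref{Fubini}, applied to the discrete bisimplicial space, whose two partial realizations are $p\mapsto |K_{p,*}|$ and $q\mapsto |K_{*,q}|$.
\end{enumerate}
This reduces both cases to the single space-level realization lemma for proper simplicial spaces. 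These simplicial spaces qualify because their latching maps are realizations of injections of simplicial sets (Eilenberg--Zilber), hence inclusions of subcomplexes. What your route buys is a uniform proof of both cases from the paper's own Fubini lemmas. The paper's route is a one-line citation. Your stated fallback is exactly the paper's route.

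\paragraph{The step that needs repair.} The paper never checks that the partially realized simplicial space is Reedy cofibrant. Your attempt to check it has a flaw. Realization carries Reedy cofibrations, not arbitrary levelwise cofibrations, to cofibrations. So knowing that $L^{ver}_qK\rtarr K_{*,q}$ is a cofibration levelwise in $p$ does not show that its realization is one; that would need a genuinely bisimplicial latching condition.

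A cleaner fix, in the closed-cofibration setting, is to show that the partial realization is good, meaning its degeneracies are closed cofibrations.
\begin{itemize}
\item Those degeneracies are realizations of the vertical degeneracies $K_{*,q}\rtarr K_{*,q+1}$.
\item Each of these is a levelwise closed cofibration between good simplicial spaces.
\item A levelwise closed cofibration $X_*\rtarr Y_*$ between good simplicial spaces is a Reedy cofibration. The reason is that $X_n\cap L_nY=L_nX$: if $y=s_jy'$ lies in $X_n$, then $y=s_jd_jy$ with $d_jy\in X_{n-1}$. Lillig's union theorem then makes the relative latching map $X_n\cup L_nY\rtarr Y_n$ a closed cofibration.
\item Hence the realization of each vertical degeneracy is a closed cofibration, so the partial realization is good, hence proper, and the realization lemma applies.
\end{itemize}
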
 
\begin{proof} By hypothesis, $\bT_{ver}^s$ or $\bT_{ver}^t$ applied levelwise gives a levelwise weak equivalence $f_*$.  For  $ss{\mathbf{Set}}$, the result is  \cite[Theorem B.2]{BF}. For $ss\sU$, $\bT$ takes $f_*$ to a weak equivalence in $\sU$, which means that $f_*$ is a weak equivalence in $s\sU$.  
\end{proof}

\begin{lem}[\autoref{ass6}(iv)] \label{Tcommute} For $K_{*,*}\in ss{\mathbf{Set}}$ and  $X_{*,*} \in ss\sU$,   
$$  |\bT_{ver}^s K_{*,*}| \iso \bT |K_{*,*}|_*. \ \ \text{and} \ \  \bT \bT_{ver}^t X_{*,*} \iso \bT\bT_*X_{*,*}$$
\end{lem}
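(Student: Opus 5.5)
Both isomorphisms are instances of the Fubini isomorphisms \autoref{Fubini} of \autoref{Kancan3}, so I would prove them by identifying each side with a total realization $\mathbf{Tot}$ of the relevant bisimplicial object. In each case, realizing vertically first and then horizontally gives the same thing as realizing horizontally first and then vertically, since both agree with the total realization.

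For $X_{*,*}\in ss\sU$, there are three steps.

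First, I take $\sV=\sU=$ spaces and $\DE^u_*=\DE^t$ in \autoref{Kancan3}. Then $\mathbf{Tot}X_{*,*}$ is the coend of $X_{p,q}\times\DE^t_p\times\DE^t_q$.

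Second, I check that $\bT_{ver}^t$, defined in \autoref{simpAi} by \autoref{Kancan} with $\sV=s\sU$, is the levelwise vertical realization $\bT_{ver}$ of \autoref{Kancan3}. This holds because colimits and the tensor with spaces in $s\sU$ are computed levelwise. Similarly, $\bT_*$, meaning $\bT$ applied in each horizontal degree, is the levelwise horizontal realization $\bT_{hor}$.

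Third, \autoref{Fubini} then gives $\bT\bT_{ver}^tX_{*,*}\iso\mathbf{Tot}X_{*,*}\iso\bT\bT_*X_{*,*}$. The only categorical hypothesis that \autoref{Kancan3} omits is that $-\times\DE^t_m$ commutes with coproducts and coequalizers. This holds in compactly generated spaces because $\DE^t_m$ is compact Hausdorff, so $-\times\DE^t_m$ is a left adjoint.

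For $K_{*,*}\in ss\mathbf{Set}$, I would reduce to the space case using $|-|=\bT\bD$. Since $\bD$ is a left adjoint that is applied levelwise, it commutes with the coequalizers defining $\bT^s_{ver}$. It also sends the tensor $K\times\DE^s_m$ in $s\mathbf{Set}$ to a discrete simplicial space. This is the step I expect to be the real obstacle: $\bT_{ver}^s$ tensors with the simplicial set $\DE^s_m$, whereas $\bT^t_{ver}$ tensors with the space $\DE^t_m$, so $\bD\bT^s_{ver}K_{*,*}\not\iso\bT^t_{ver}\bD K_{*,*}$ in general. The way around it is not to compare at the level of simplicial spaces, but to compute directly with the coend formula and the fact that $|-|$ preserves colimits and finite products (in compactly generated spaces):
$$|\bT_{ver}^sK_{*,*}| \iso \Big|\int^{m} K_{m,*}\times \DE^s_m\Big| \iso \int^{m} |K_{m,*}|\times |\DE^s_m| \iso \int^m |K_{m,*}|\times\DE^t_m \iso \bT|K_{*,*}|_*.$$
The steps are:
\begin{enumerate}
\item Here $|K_{*,*}|_*$ realizes each simplicial set $K_{m,*}$, and $\bT$ is the realization of the resulting simplicial space.
\item The isomorphism $|K_{m,*}\times\DE^s_m|\iso|K_{m,*}|\times\DE^t_m$ is the classical fact that realization preserves products with the finite simplicial set $\DE^s_m$.
\item The isomorphism $|\DE^s_m|\iso\DE^t_m$ is natural in $m\in\DE$, so the coend diagrams match.
\end{enumerate}

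Alternatively, both sides are isomorphic to $\mathbf{Tot}$ of the bisimplicial discrete space $\bD K_{*,*}$, by the same commutation of $|-|$ with colimits and finite products. Either way, the main care needed is the naturality in $m$ of $|\DE^s_m|\iso\DE^t_m$ and the product-preservation of $|-|$.
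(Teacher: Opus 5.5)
Your proposal is correct. For the simplicial-set isomorphism you use the same idea as the paper, which simply appeals to the categorical Fubini theorem of \autoref{Kancan3}. Your coend computation is a more careful version of that appeal. \autoref{Kancan3} does not literally apply when one realization tensors with $\DE^s_m$ and the other with $\DE^t_m$, and you isolate exactly what is needed: $|-|$ preserves colimits and products with the finite simplicial sets $\DE^s_m$, and $|\DE^s_m|\iso\DE^t_m$ naturally in $m$.

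For the topological isomorphism your route differs. The paper observes that $\bT_*X_{*,*}$ has $p$th space $\bT X_{p,*}$. It identifies this with $\bT^t_{ver}X_{*,*}$ and declares the isomorphism a tautology. You instead assign $\bT^t_{ver}$ and $\bT_*$ to different directions and pass through $\mathbf{Tot}$ using \autoref{Fubini}. Your labels are muddled: by your own description, applying $\bT$ in each horizontal degree $p$ gives $\bT X_{p,*}$, which is $\bT_{ver}$ in the sense of \autoref{Kancan3}, not $\bT_{hor}$. This does no harm. Since $\bT\bT_{ver}\iso\mathbf{Tot}\iso\bT\bT_{hor}$, your argument gives the isomorphism whichever direction each functor realizes.

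That robustness is worth the extra step. Read literally, \autoref{ass6}(iv) has realization on $s\sT$ built from \autoref{Kancan} with $\sV=s\sU$. That realizes the outer simplicial direction, while $\SI_*=\bT_*$ realizes the inner one. So the assumption genuinely asks for an interchange of directions, which is precisely what \autoref{Fubini} supplies. The paper's tautology reading is shorter, but it rests on the naming convention for $\bT^t_{ver}$.
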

\begin{proof}  We must reinterpret what \autoref{ass6}(iv) is saying in our present context.  In the first isomorphism
$|K_{*,*}|_*\in s\sU$ on the right has  $p$th space the realization $|K_{p,*}|$ and this
commutation of left adjoints holds by application of the categorical Fubini's theorem, as in the version formulated in \autoref{Kancan3}.  In the second, everything is topological. Here $\bT_*X_{*,*}\in s\sU $ has $p$th  space $\bT X_{p,*}$, so that it is exactly $\bT_{ver}^t$ and the  claimed isomorphism is a tautology.
\end{proof}

\begin{prop}[\autoref{ass6}(v)]\label{all?} For $X_{*}\in s\sU$,  consider the natural map of simplicial sets 
$$ \ga\colon \bT_{ver}^s \mathbf{Sing}_* X_*   \rtarr \mathbf{Sing} \bT X_*$$
and the natural map of simplicial spaces
$$ \ga\colon  \bT_{ver}^t \bS_* X_* \rtarr \bS \bT X_*.$$
The first is a weak equivalence. 
The second is a topological simplicial homotopy equivalence.    Therefore all spaces are both $\mathbf{Sing}$-connective and
$\bS$-connective.
\end{prop}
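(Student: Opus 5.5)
The plan is to treat the topological map first, since \autoref{new0} makes it almost formal, and then handle the simplicial set map by realizing and comparing counits. Write $X_{p}$ for the $p$th space of $X_*$. In the bisimplicial space $\bS_*X_*$, the $(p,q)$-simplices are $\bS_q X_p = \mathrm{Map}(\DE^t_q, X_p)$. Realizing in the $p$-direction (the direction coming from $X_*$) gives, at level $q$, the map
\[ \ga_q\colon \bT\,\mathrm{Map}(\DE^t_q, X_*) \rtarr \mathrm{Map}(\DE^t_q, \bT X_*). \]
The maps $\pi$, $\io$ and $h$ of \autoref{new0} are natural in the space and commute with faces and degeneracies in $q$. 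Hence they assemble to the following bisimplicial data:
\begin{enumerate}[(a)]
\item a deformation retraction $\pi_*\colon \bS_*X_* \rtarr c_*X_*$ (constant in $q$), with section $\io_*$;
\item a deformation retraction $\pi\colon \bS\bT X_* \rtarr c_*\bT X_*$, with section $\io$.
\end{enumerate}
Realization $\bT$ preserves topological simplicial homotopies, because it commutes with $-\times I$ (\autoref{Homotopy}). So $\bT_{ver}^t\pi_*$ is a topological simplicial homotopy equivalence onto $\bT_{ver}^t c_*X_* \iso c_*\bT X_*$, using the mini axiom $\bT c_* \iso \Id$ levelwise.

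The key check is that $\pi\com\ga = \bT_{ver}^t\pi_*$. Both maps evaluate a singular simplex at the last vertex $(0^q,1)$. Since $\ga$ is adjoint to $\bT\epz_*$ and $\epz$ is evaluation, this identity follows from the adjoint description of $\ga$ in \autoref{ass6}(v) together with a triangle identity. It then follows that $\ga \htp \io\com\pi\com\ga = \io\com \bT_{ver}^t\pi_*$. The right-hand side is a composite of topological simplicial homotopy equivalences, so $\ga$ is one too, with explicit inverse $\bT_{ver}^t\io_*\com\pi$.

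For the simplicial set map, I would apply $|-|$ and use \autoref{Tcommute} to identify $|\bT_{ver}^s\mathbf{Sing}_*X_*|$ with $\bT|\mathbf{Sing}_*X_*|_*$. Under this identification, the composite of $|\ga|$ with $\epz\colon|\mathbf{Sing}\bT X_*|\rtarr \bT X_*$ should equal $\bT\epz_*$, where $\epz_*\colon|\mathbf{Sing}_*X_*|_*\rtarr X_*$. This is again read off from the adjoint definition of $\ga$. Each $\epz\colon|\mathbf{Sing}X_p|\rtarr X_p$ is a weak equivalence, so $\epz_*$ is a levelwise weak equivalence. The simplicial space $|\mathbf{Sing}_*X_*|_*$ is Reedy cofibrant, since its latching maps are realizations of inclusions of simplicial sets, and $X_*$ is Reedy cofibrant by our blanket assumption. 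Therefore $\bT\epz_*$ is a weak equivalence by \autoref{ass6}(iii). The target map $\epz$ is a weak equivalence, so by two out of three $|\ga|$ is a weak equivalence, which by definition means $\ga$ is one. I expect this step to be the main obstacle: it needs the Reedy cofibrancy bookkeeping, and it needs a careful check that the Fubini isomorphism is compatible with $\ga$ and the counits.

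Finally, for connectivity, take $Z\in\sU$ and $X_* = \bE_*\OM Z$. By \autoref{LAcon5}, $\ga$ is an equivalence exactly when $\OM\xi$ is. It then remains to show that $\OM$, meaning $\mathbf{Sing}$ or $\bS$, reflects weak equivalences of spaces. For $\mathbf{Sing}$ this holds because $|\mathbf{Sing} f|$ is weakly equivalent to $f$ via $\epz$. For $\bS$ it holds because $\bT\bS f \htp f$ by \autoref{new?}. Hence $\xi$ is a weak equivalence, and every space is both $\mathbf{Sing}$-connective and $\bS$-connective.
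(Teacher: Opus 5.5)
\textbf{The topological map: the retraction $\pi$ is not simplicial.} Your treatment of the $\mathbf{Sing}$ map, and your reduction of connectivity to \autoref{LAcon5}, follow the paper's proof; you are more careful about Reedy cofibrancy, and for $X_*=\bE_*\OM Z$ the blanket assumption on bar constructions does cover it. The genuine gap is in the topological map. You share it with the paper's proof, which uses the same step from \autoref{new0}: evaluation at the last vertex is not a simplicial map $\bS X\rtarr c_*X$, and $h$ is not a simplicial homotopy. Both fail at the last face. For $\ph\in\bS_1X$ one has $\pi_0(d_1\ph)=\ph(e_0)$ but $\pi_1(\ph)=\ph(e_1)$, where $e_i$ denotes the $i$th vertex. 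In degree $q$, $d_q h(\ph,0)$ is constant at $\ph(e_q)$, while $h(d_q\ph,0)$ is constant at $\ph(e_{q-1})$.

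\textbf{The stated claim is false.} No other choice of retraction can rescue the argument. Since $d_0,d_1\colon\bS_1X\rtarr X$ send a path to its two endpoints, every simplicial map $\bS X\rtarr c_*X$ is constant on path components in level $0$. Take the discrete simplicial circle $X_*=\DE^s_1/\partial\DE^s_1$. Then $\bS_*X_*\iso c_*X_*$ and $\ga=\io\colon c_*S^1\rtarr\bS S^1$. For any simplicial $f\colon\bS S^1\rtarr c_*S^1$, the composite $f\com\io$ is constant, so it is not homotopic to the identity of $c_*S^1$. Hence the claim that $\ga$ is a topological simplicial homotopy equivalence is false as stated, and neither your argument nor the paper's can establish it.

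\textbf{What survives.} What does hold is that $\ga$ is a weak equivalence, and this suffices for the connectivity conclusion. Replace $\pi$ by the genuinely simplicial map $\io$; one checks directly that $\ga\com\bT^t_{ver}\io_*=\io\colon c_*\bT X_*\rtarr\bS\bT X_*$. In each degree $q$, both $\io$ and $(\bT^t_{ver}\io_*)_q=\bT\bigl(X_*\rtarr\mathrm{Map}(\DE^t_q,X_*)\bigr)$ are homotopy equivalences. The reason is that evaluation at a vertex and the straight-line contraction of $\DE^t_q$ are natural in the space. They are therefore simplicial in the $X_*$-direction and preserved by $\bT$; they fail to be simplicial only in the $q$-direction. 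So $\ga$ is a levelwise homotopy equivalence, and under the Reedy cofibrancy assumption $\bT\ga$ is a weak equivalence, which is exactly what \autoref{LAcon5} needs.

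\textbf{Your last paragraph.} Do not cite \autoref{new?}, since its proof also uses $\pi$. Instead use $\epz\com\bT\io=\id$, with $\bT\io$ a weak equivalence by the same levelwise argument. This shows that $\epz\colon\bT\bS X\rtarr X$ is a natural weak equivalence, so $\bS$ reflects weak equivalences.
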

\begin{proof}  The last statement follows from the first two by unravelling \autoref{LAcon5} in our context. Its arrow $\ga$ is always a weak equivalence, hence so is its arrow $\OM \xi$ and therefore $\xi$ is always a weak equivalence.  

For the first map, apply $|-|$ and consider the following diagram of spaces:
$$\xymatrix{
 | \bT_{ver}^s\mathbf{Sing}_* X_* | \ar[r]^-{|\ga |} \ar[d]_{\epz}  & |\mathbf{Sing} \bT X_*|\ar[d]^{\epz} \\
  \bT X_*  \ar[r]_{\id} & \bT X_*.\\}
  $$
 The evaluation map $\epz$ on the right is a natural weak equivalence by classical arguments \cite[Theorem 16.6]{Mayss}. By \autoref{Tcommute}, $|\bT_{ver}^s\mathbf{Sing}_* X_* | \cong \bT | \mathbf{Sing}_* X_*|_*$ and $\epz$ here is this isomorphism post-composed by $\bT \epz_*$ and is therefore also a weak equivalence.  A check of definitions shows that the diagram commutes. It follows that $|\ga|$ is a weak equivalence.
 %We claim that there is an analogous evaluation map $\epz_2$ on the left which makes the diagram commute and is also a weak equivalence.  Granting the claim, it follows that 
 %$|\ga|$ is a weak equivalence. The (horizontal) simplicial set of $n$-simplicies of $\mathbf{Sing}_*X_*$ is $\mathbf{Sing}X_n$.   Its set of (vertical) $m$-simplices is the set $\mathbf{Map}(\Delta_m^t, X_n)$.   By \autoref{itworks}, we can restrict to the diagonal, so taking $m=n$, to obtain
%$| \mathbf{Sing}_* X_*|_2$.   But then evaluation of maps 
% $\mathbf{Map}(\Delta_n^t, X_n)\times \DE_n^t \rtarr X_n$ gives the map $\epz_2$, and it is a weak equivalence by an argument similar to the proof of the cited classical result.  \pmar{Check?} A check of definitions shows that the diagram commutes. For reassurance, note that all simplicial sets are Reedy cofibrant. 

For the second map, apply $\bT$ levelwise and consider the following diagram, where $\pi\colon \bS X \rtarr c_*X$ for a space $X$ is the homotopy equivalence defined in the proof of \autoref{new0}.
$$\xymatrix{
 \bT\bT_{ver}^t \bS_* X_* \ar[r]^-{\bT\ga} \ar[d]_{\bT \bT_{ver}^t \pi_*} & \bT \bS \bT X_* \ar[d]^{\bT\pi} \\
\bT \bT_{ver}^t c_*X_*  \ar[r]_{\iso} & \bT c_* \bT X_*.\\} 
  $$
At the bottom left, $c_*X_*$ denotes the levelwise constant bisimplicial space at the simplicial space $X_*$; at the bottom right,  $\bT c_* \bT X_* \iso c_*\bT X_*$ is the constant simplicial space at $\bT X_*$. By inspection, $\bT_{ver}^t  c_*\iso c_* \bT$, giving the bottom isomorphism.  The diagram commutes and the vertical arrows are equivalences, hence $\ga$ is an equivalence.
 \end{proof}

We conclude that the homotopical monadicity theorem as stated in  \autoref{HomMon2} and 
\autoref{Onew2} applies to give the following complement to \autoref{new?} for the adjunctions $(|-|,\mathbf{Sing})$ and $(\bT, \bS)$. 

\begin{thm}  The homotopy category of spaces is equivalent to both the homotopy category of  $\GA_s$-algebras in simplicial sets and the homotopy category of $\GA$-algebras in simplicial spaces, where $\GA_s = \mathbf{Sing}\com |-|$, $\GA = \bS\bT$, and the latter two homotopy categories are defined using linked weak equivalences as in \autoref{Onew}.
\end{thm}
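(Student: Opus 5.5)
This theorem is a direct application of \autoref{Onew2} to the two adjunctions $(\SI,\OM)=(|-|,\mathbf{Sing})$ from $s\mathbf{Set}$ to $\sU$ and $(\SI,\OM)=(\bT,\bS)$ from $s\sU$ to $\sU$. Here $\sS=\sU$ in both cases, and the monads are $\GA_s$ and $\GA$ respectively. The plan is to check that the standing hypotheses of \autoref{SecHom} hold in each case, and then to identify $\sS_c$ with all of $\sU$.

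\textbf{Standing hypotheses.} First I would confirm that both categories are cocomplete and carry notions of homotopy and weak equivalence with two out of three. For $s\mathbf{Set}$ and $\sU$ this is standard. For $s\sU$ I use the definition that $f$ is a weak equivalence when $\bT f$ is one, which inherits two out of three from $\sU$.

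Next, $\SI$ and $\OM$ must preserve weak equivalences.
\begin{itemize}
\item $|-|$ and $\bT$ do so by definition.
\item $\mathbf{Sing}$ does so by two out of three applied to the natural weak equivalence $\epz\colon|\mathbf{Sing}X|\rtarr X$.
\item For $\bS$, \autoref{new0} gives a natural homotopy equivalence $\bS X\simeq c_*X$, and $\bT c_*X=X$. Hence $\bT\bS f$ is a weak equivalence whenever $f$ is.
\end{itemize}

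\textbf{\autoref{ass6}.} I would then go through items (i)--(v) using the results already established in this section.
\begin{itemize}
\item (i) is \autoref{simpAi}.
\item (ii) follows from \autoref{Homotopy}, using classical simplicial homotopies in the $s\mathbf{Set}$ setting and topological ones in the $s\sU$ setting.
\item (iii) is the lemma labeled \autoref{ass6}(iii).
\item (iv) is \autoref{Tcommute}.
\item (v) is \autoref{all?}. It holds with no connectivity hypothesis at all: the map $\ga$ is a weak equivalence for every simplicial space $X_*$.
\end{itemize}
The blanket Reedy cofibrancy of the bar constructions is automatic in $s\mathbf{Set}$. In $s\sU$ I would argue it as in the discussion after \autoref{ouch}: \autoref{new0} lets one replace $\bS$ by constant objects up to homotopy. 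This is the step that needs the most care.

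\textbf{Identifying $\sS_c$.} By the last statement of \autoref{all?}, every space is both $\mathbf{Sing}$-connective and $\bS$-connective, so $\sS_c=\sU$ in both cases. \autoref{HomMon} and \autoref{HomMon2} therefore apply to each adjunction. \autoref{Onew2} then gives that the homotopy category of $\GA_s$-algebras, respectively $\GA$-algebras, defined via the linked weak equivalences of \autoref{Onew}, is equivalent to the homotopy category of $\sU$. Composing the two equivalences shows the two algebra homotopy categories are also equivalent to each other.

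\textbf{Main obstacle.} I expect the hard point to be the homotopical bookkeeping in $s\sU$: checking that the bar constructions $B_*(\bS,\GA,Y)$ and $B_*(\GA,\GA,Y)$ are Reedy cofibrant, or else that (iii) holds for them anyway. I would try first to use the deformation retraction of \autoref{new0} levelwise, reducing to constant simplicial spaces, where realization is the identity.
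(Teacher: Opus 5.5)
Your proposal is correct and follows the paper's own route. You verify \autoref{ass6}(i)--(v) using the results of \autoref{SecSimp}, with \autoref{all?} supplying (v) unconditionally and hence giving $\sS_c=\sU$ for both adjunctions, and then read off the equivalences from \autoref{HomMon2} and \autoref{Onew2}. The Reedy cofibrancy point you flag in $s\sU$ is handled no more carefully in the paper, which covers it by the blanket assumption on bar constructions and the remark that \autoref{new0} removes any real difficulty.
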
 

\subsection{Comments on discrete simplicial spaces} 

As already noted, the unit $\Id\rtarr \bU\bD$ is the identity: the underlying simplicial set of a simplicial set regarded as a discrete simplicial space is itself.  
Write $\epz^d$ for the counit of the adjunction $(\bD,\bU)$.  Levelwise, it is given by the identity continuous map $\epz^d\colon X^{\de}\rtarr X$ from the discretization of a space to itself.   For a space $X$, $\epz$ is  the composite
$$\xymatrix@1{  |\mathbf{Sing} X| = \bT\bD\bU\bS X \ar[r]^-{\bT \epz^d} & \bT \bS X \ar[r]^-{\epz} & X.\\} $$
Although $\epz^d$ is rarely a weak equivalence, \autoref{new?} implies that $\bT\epz^d \bS$ is a weak equivalence.  Since $\bS$ preserves weak equivalences,  the diagram
$$\xymatrix{
\bS\bT\bD\bU\bS X \ar[r]^-{\bS\bT\epz^d } & \bS\bT \bS X\\
\bD\bU \bS X \ar[r]_-{\epz^d}  \ar[u]^{\et} & \bS X \ar[u]_{\et} \\} $$
then shows that  the bottom map $\epz^d$ is a weak equivalence since the maps $\et$ are weak equivalences, again by \autoref{new?}.  We find this quite surprising!  Taking $X = \bT Y$ for a simplicial space $Y$, we have a diagram
$$\xymatrix{
\bD\bU  \bS\bT Y \ar[r]^-{\epz^d} & \bS \bT Y \\
\bD\bU Y \ar[r]_{\epz^d} \ar[u]^{\bD\bU  \et} & Y. \ar[u]_{\et}\\}$$
The top arrow and the right vertical arrow are weak equivalences, hence  $\bD\bU \et$ is a  weak equivalence if and only if the bottom arrow $\epz^d\colon \bD\bU  Y \rtarr Y$ is a weak equivalence.  But that is not an intuitive conclusion!  

For example, when $\et$ is a simplicial homotopy equivalence, it is reasonable to expect $\bD\bU \et$  to also be one.  Is it true that a weak equivalence between Kan simplicial spaces (defined the same way as Kan simplicial sets) is a simplicial homotopy equivalence?  This should be so with plausible model structures. 
But that may lead to $\epz^d\colon \bD\bU  Y \rtarr Y$ being a weak equivalence in unexpected and false generality.   

It is intriguing to ask if our framework sheds any light on classical results and questions, such as those of 
Milnor \cite{Milnor},  Friedlander and Mislin \cite{FP}, and others.  They study the homological behavior of $B\epz^d\colon B(G^{\de}) \rtarr  BG$ for certain Lie groups $G$, finding that there is a much closer relationship between the cohomologies than one might expect.

{
\section{Appendix.  Abstract monadic linking theory}\label{ablink}
\begin{defn}\label{oplax} An oplax map of monads $\bT\colon \bD \rtarr \bC$ is a functor $\bT$ together with a natural transformation 
$\ga\colon \bT\bD \rtarr \bC \bT$ such that the following diagrams commute:
$$ \xymatrix{ 
& \bT \ar[dl]_{\bT\et_{\bD}} \ar[dr]^{\et_{\bC}} &   & &  \bT\bD\bD \ar[r]^{\ga} \ar[d]_{\bT\mu_{\bD}} & \bC \bT \bD \ar[r]^{\bC\ga} & \bC\bC\bT \ar[d]^{\mu_{\bC}} \\
\bT\bD \ar[rr]_{\ga} & & \bC\bT                         & & \bT\bD \ar[rr]_{\ga} & & \bC\bT\\}$$
\end{defn}

The example of primary interest in this paper is given in \autoref{oplaxprop}.  There $\bC$ and $\bD$ are both monads associated to adjunctions, and the functor $\bT$ commutes with the left adjoint but not with the right adjoint.  Therefore verification that $\bT$ is an oplax map focuses on the right adjoint, as in \autoref{oplaxprop2}.   In that case, the right adjoints give functors to the respective categories of algebras, and then \autoref{link2} gives an example where the following definition applies.

\begin{defn}\label{CDlink} Let $\bT\colon \bD\rtarr \bC$ be an oplax map of monads.  A $\bC$-algebra $(X,\tha)$ is linked to a $\bD$-algebra $(Y,\ps)$ if there is a map $\de\colon \bT Y \rtarr X$ such that the following diagrams commute: 
$$\xymatrix{
\bT Y \ar[rr]^{\de} \ar[d]_{\bT\et_{\bD}} & & X \ar[d]^{\et_{\bC}}    & &  \bT\bD Y \ar[r]^{\ga} \ar[d]_{\bT\ps} & \bC \bT Y \ar[r]^{\bC\de} & \bC X \ar[d]^{\tha} \\
 \bT\bD Y \ar[r]_{\ga} & \bC\bT Y \ar[r]_{\bC\de} & \bC X      & & \bT Y \ar[rr]_{\de} & & X \\}$$
\end{defn}
\begin{rem} When $\bT$ is the identity $\bC\rtarr \bC$, with $\ga = \id$, $\de$ is just a map of $\bC$-algebras, so linking generalizes maps of $\bC$-algebras to the context of oplax maps of monads. In an intermediate case, we might have an oplax map $\bT\colon \bD\rtarr \bC$ such that  $\bT\bD=\bC \bT$.  For a $\bD$-algebra $(Y,\ps)$ we would then have the $\bC$-action $\bT\ps\colon \bC\bT Y = \bT\bD Y \rtarr \bT Y$ on $\bT Y$, but this fails in our examples, forcing us to the theory of linkings.  In another intermediate case, we might have $X = \bT Y$ with $\de = \id$.  That is the case given for example in \autoref{link} and \autoref{exmp1}.
\end{rem}

\begin{rem} The example of immediate interest is given by \autoref{link2}.  There we take $\bC$ to be the adjunction monad $\GA = \OM\SI$ on $\sT$ and take $\bD$ to be the analogous adjunction monad $\GA_* =\OM_*\SI_*$ on $s\sT$. The $*$ indicates levelwise application of a functor on $\sT$.  We take $\bT$ to be realization $s\sT  \rtarr \sT$ and $\ga\colon \bT\bD\rtarr \bC\bT$ to be $\ga\colon \bT\OM_*\SI_* \rtarr \OM \bT\SI_* \iso \OM \SI \bT$. 
We take $Y = \OM_* Z_*$ for $Z_*\in s\sS$ and take $X = \OM \bT Z_*$.  
We take $\de = \ga\colon \bT\OM_* \rtarr \OM\bT$, ignoring $\SI_*$, as earlier in the paper.  The diagrams of \autoref{oplax2} then give the diagrams of \autoref{CDlink}. In particular, taking $Z_* = B_*(\GA,\GA, Y) = \OM_*B_*(\SI,\GA,Y)$, we see that $B_*(\GA,\GA,Y)$ is linked to $\OM\bE Y$ for any $Y\in \GA[\sT]$.
\end{rem}

There is an elaboration of this monadic context in Srinivasan \cite{Ajay},  where it is applied in motivic homotopy theory.
}

\bibliographystyle{alpha}
\bibliography{references}

\end{document}